\newcommand{\RR}{\mathbb{R}}
\newcommand{\sK}{\mathsf{K}}
\newtheorem*{mainthm}{Main Theorem}
\newtheorem{thm}{Theorem}[section]
\newtheorem{lemma}[thm]{Lemma}
\newtheorem{prop}[thm]{Proposition}
\newtheorem{pro}[thm]{Problem}
\theoremstyle{definition}
\newtheorem{remark}[thm]{Remark}
\newtheorem{eg}[thm]{Example}
\DeclareMathOperator{\conv}{conv}
\DeclareMathOperator{\aff}{aff}
\newcommand*\patchAmsMathEnvironmentForLineno[1]{
	\expandafter\let\csname old#1\expandafter\endcsname\csname #1\endcsname
	\expandafter\let\csname oldend#1\expandafter\endcsname\csname end#1\endcsname
	\renewenvironment{#1}
	{\linenomath\csname old#1\endcsname}
	{\csname oldend#1\endcsname\endlinenomath}}
\newcommand*\patchBothAmsMathEnvironmentsForLineno[1]{
	\patchAmsMathEnvironmentForLineno{#1}
	\patchAmsMathEnvironmentForLineno{#1*}}
\title{On colorings of hypergraphs embeddable in $\RR^d$}
\date{}
\author{Seunghun Lee\footnote{
		Department of Mathematics, Keimyung University, Daegu, South Korea.
    \texttt{seunghun.math@gmail.com}} 
	\ and Eran Nevo\footnote{
		Einstein Institute of Mathematics, Hebrew University, Jerusalem, Israel;  
        Mathematics Research Institute, Universidad de Valladolid, Valladolid, Spain; CMSA, Harvard University, Cambridge MA, USA. 
        Partially supported by Israel Science Foundation grant ISF-2480/20. \texttt{nevo@math.huji.ac.il}, \texttt{eran.nevo@uva.es}}
}
\begin{document}
	\maketitle
\begin{abstract}
	The \textit{(weak) chromatic number} of a hypergraph $H$, denoted by $\chi(H)$, is the smallest number of colors required to color the vertices of $H$
 so that no hyperedge of $H$ is monochromatic. For every $2\le k\le d+1$, denote by $\chi_L(k,d)$ (resp. $\chi_{PL}(k,d)$) the supremum 
$\sup_H \chi(H)$ where $H$ runs over all finite $k$-uniform hypergraphs such that $H$ forms the collection of maximal faces of a simplicial complex that is linearly (resp. PL) embeddable in $\RR^d$.

Following the program by Heise, Panagiotou, Pikhurko and Taraz, we improve their results as follows:
For $d \geq 3$, we show that A.  $\chi_L(k,d)=\infty$ for all $2\le k\le d$, B. $\chi_{PL}(d+1,d)=\infty$ and C. $\chi_L(d+1,d)\ge 3$ for all odd $d\ge 3$. 
As an application, we extend the results by Lutz and M\o ller on the weak chromatic number of the $s$-dimensional faces in the triangulations of a fixed triangulable $d$-manifold $M$:
D. $\chi_s(M)=\infty$ for $1\leq s \leq d$.
\end{abstract}
\section{Introduction}
The problem of embedding a $k$-dimensional simplicial complex in the Euclidean $d$-space, \textit{geometrically} (that is, \textit{linearly}) or \textit{piecewise-linearly} (\textit{PL} in short), is hard in general, at times undecidable; see e.g.~\cite{Hardness-of-embedding-simplicial-complexes,hardness-geometric-embeddability} for a computational complexity viewpoint on this problem. In this paper we focus on the relation between embeddability and (weak) chromatic number of simplicial complexes, equivalently of hypergraphs.
We use both known and novel suitable combinatorial constructions of families of $k$-hypergraphs with unbounded/high chromatic number, and show how to geometrically/PL embed them in $\RR^d$. The results are summarized in the Main Theorem below.

\smallskip

Given a hypergraph $H=(V,E)$, we say that $H$ is \textit{(weakly) $m$-colorable} if there is a coloring $c:V\to [m]$ such that every hyperedge of $H$ is not monochromatic. The \textit{(weak) chromatic number} of $H$, denoted by $\chi(H)$, is the smallest $m$ such that $H$ is $m$-colorable.

For every $2\le k\le d+1$, denote by $\chi_L(k,d)$ (resp. $\chi_{PL}(k,d)$) the supremum 
$\sup_H \chi(H)$ where $H$ runs over all finite $k$-uniform hypergraphs such that $H$ forms the collection of maximal faces of a simplicial complex that is geometrically (resp. PL) embeddable in $\RR^d$. Clearly $\chi_L(k,d)\le \chi_{PL}(k,d)$. Note that for every $2\le k<k' \le d+1$ we have 
$\chi_L(k',d)\le \chi_L(k,d)$. Indeed, if a pure $(k'-1)$-dimensional simplicial complex $\mathsf{K}$ geometrically (resp. PL) embeds in $\RR^d$, then so is its $(k-1)$-skeleton, and every proper coloring of its $(k-1)$-skeleton is also a proper coloring of the facets of $\mathsf{K}$.
Further, every $(k-1)$-dimensional simplicial complex geometrically embeds in $\RR^{2k-1}$, and the complete $k$-uniform hypergraph on $n$ vertices has chromatic number at least $n/(k-1)$, as no $k$ vertices can be colored the same. We obtain the trivial bound $\chi_L(k,d)=\infty$ for all
$2\le k\le (d+1)/2$.

In \cite{coloring_d-embeddable}, Heise, Panagiotou, Pikhurko and Taraz slightly improved this result by showing $\chi_L(k,d)=\infty$ also in the cases $d=2k-3$ and $d=2k-2$ for $3\le k$. 

\medskip 

Our main results are summarized in the following theorem:
\begin{mainthm}
\label{mainthm}
Fix $d\geq 3$.

A. \label{main1} $\chi_L(k,d)=\infty$ for all $2\le k\le d$. 

B. \label{main2} $\chi_{PL}(d+1,d)=\infty$.

C. $\chi_L(d+1,d)\ge 3$ for all odd $d\ge 3$.

D. $\chi_s(M)=\infty$ for every triangulable 
$d$-manifold $M$ and $1 \leq s\leq d$, where $\chi_s(M)= \sup_{H}\chi(H)$ where $H$ runs over all $(s+1)$-uniform hypergraphs such that $H$ forms the collection of $s$-dimensional faces of a simplicial complex which triangulates $M$.
\end{mainthm}

Note that Main theorem D follows from Main theorems A and B and the following recent PL extension theorem\footnote{We first learnt a more complicated proof of this PL extension theorem from Adiprasito, and then a simpler proof from Venturello and Yashfe~\cite{PL-extension-karim-geva}, essentially the one in~\cite{adiprasito-patakova2024higherdimensionalversionfarystheorem}.}~\cite{adiprasito-patakova2024higherdimensionalversionfarystheorem}, which generalizes a classical result by Bing for the $d=3$ case~\cite[Theorem I.2.A]{Bing_geometric-topology-3-manifolds}.
\begin{thm}[\cite{adiprasito-patakova2024higherdimensionalversionfarystheorem, PL-extension-karim-geva}] \label{theorem_PL_extension_karim-geva}
For every nonnegative integer $d$, if a simplicial complex $\sK$ is PL embeddable in 
$\RR^d$
then there exists a (PL) triangulation $\mathsf{T}$
of the $d$-simplex containing $\sK$ as a subcomplex in its interior
such that the boundary of $\mathsf{T}$ is combinatorially isomorphic to the boundary of the 
$d$-simplex.
\end{thm}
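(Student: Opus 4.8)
The plan is to sit $\sK$ inside a regular neighborhood and then fill in the complement. Since $\RR^d$ is PL homeomorphic to the interior of the standard $d$-simplex $\Delta^d$, I first regard the given embedding as a PL embedding $|\sK|\hookrightarrow\operatorname{int}\Delta^d$ (the case $d=0$ being trivial, so assume $d\ge1$ and $\sK\ne\emptyset$). Being compact, $|\sK|$ has a regular neighborhood $N\subseteq\operatorname{int}\Delta^d$: a compact PL $d$-manifold with nonempty boundary (it cannot be closed, since no closed $d$-manifold embeds in $\RR^d$) that PL collapses onto $|\sK|$ and satisfies $|\sK|\subseteq\operatorname{int}N$. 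Set $W:=\overline{\Delta^d\setminus N}$, a compact PL $d$-manifold with $\partial W=\partial N\sqcup\partial\Delta^d$. The goal is to build (i) a simplicial complex $\mathsf{N}$ that is PL homeomorphic to $N$ and contains $\sK$ as a subcomplex of its interior, and (ii) a simplicial complex $\mathsf{W}$ that is PL homeomorphic to $W$ and whose boundary is the disjoint union of a copy of $\partial\mathsf{N}$ and the standard $\partial\Delta^d$, chosen compatibly so that gluing $\mathsf{N}$ and $\mathsf{W}$ along $\partial\mathsf{N}$ reproduces the splitting $\Delta^d=N\cup_{\partial N}W$. Then $\mathsf{T}:=\mathsf{N}\cup_{\partial\mathsf{N}}\mathsf{W}$ is PL homeomorphic to $\Delta^d$, has $\partial\mathsf{T}=\partial\Delta^d$ as a simplicial complex, and contains $\sK$ as a subcomplex of its interior, as required.

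The heart of the matter, and the step I expect to be the main obstacle, is (i): realizing $\sK$ \emph{itself}, rather than merely a subdivision of it, as a subcomplex. A derived-neighborhood triangulation of $N$ only contains a subdivision of $\sK$, and reversing the collapse $N\searrow|\sK|$ by elementary expansions runs into the same difficulty. Instead, I would use that a regular neighborhood of a compact polyhedron in the interior of a PL manifold is a mapping cylinder neighborhood (Cohen): $N$ is PL homeomorphic, rel $|\sK|$, to the mapping cylinder $M(c)$ of a PL map $c\colon\partial N\to|\sK|$. Take a simplicial approximation $c'\colon(\partial N,\alpha)\to\sK$, where $\alpha$ is a sufficiently fine subdivision of a triangulation of $\partial N$ while the target triangulation $\sK$ is left \emph{unsubdivided}; since $c'\simeq c$ and mapping cylinders of PL homotopic maps are PL homeomorphic rel the target, $M(c')$ is PL homeomorphic to $M(c)$, hence to $N$, rel $|\sK|$. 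Now $M(c')$ carries a natural simplicial structure --- glue to $\sK$, via $c'$, the prisms $\sigma\times[0,1]$ over the simplices $\sigma$ of $\alpha$, each triangulated without subdividing the ends $\sigma\times\{0\}$ and $\sigma\times\{1\}$ --- in which $\sK$ is a subcomplex lying in the interior, the boundary being the copy of $\alpha$. This is the desired $\mathsf{N}$, with $\partial\mathsf{N}=\alpha$.

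For (ii), fix a PL homeomorphism $\Phi\colon|\mathsf{N}|\xrightarrow{\cong}N$ rel $|\sK|$ and let $\beta:=\Phi_*\alpha$ be the resulting triangulation of the boundary component $\partial N\subseteq W$; also fix the standard triangulation of $\partial\Delta^d$. An arbitrary triangulation of $W$ induces some triangulations of its two boundary manifolds, and since any two PL triangulations of a given compact PL manifold are PL homeomorphic, a PL homeomorphism between them being realized by a triangulated collar $\partial N\times[0,1]$, respectively $\partial\Delta^d\times[0,1]$ (a standard fact in PL topology), a routine collar modification yields a triangulation $\mathsf{W}$ of $W$ inducing exactly $\beta$ on one boundary component and the standard $\partial\Delta^d$ on the other. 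Gluing $\mathsf{N}$ to $\mathsf{W}$ along the simplicial isomorphism $\partial\mathsf{N}=\alpha\xrightarrow{\cong}\beta$ produces $\mathsf{T}$; transporting via $\Phi$ on the $\mathsf{N}$-side and via the chosen homeomorphism $|\mathsf{W}|\cong W$ on the $\mathsf{W}$-side (these agree on the gluing locus by the very definition of $\beta$), the underlying space of $\mathsf{T}$ becomes $N\cup_{\partial N}W=\Delta^d$. Since $\partial\mathsf{T}=\partial\Delta^d$ combinatorially by construction and $|\sK|\subseteq\operatorname{int}N\subseteq\operatorname{int}\Delta^d$, this completes the plan. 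The only input beyond standard PL manifold machinery (regular neighborhoods, collars, subdivisions realized by collars) is the mapping cylinder neighborhood theorem together with simplicial approximation, invoked precisely so as to keep $\sK$ unsubdivided.
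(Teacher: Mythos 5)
Your overall skeleton (regular neighborhood $N$ of $|\sK|$, triangulate $N$ with $\sK$ unsubdivided, triangulate the complement $W$ with controlled boundary, glue) is reasonable, and you correctly identify step (i) as the heart of the matter; note also that the paper does not prove this theorem at all but cites it (Adiprasito--Patakov\'a, Venturello--Yashfe), so there is no internal proof to compare against. However, your resolution of step (i) has a genuine gap: the claim that ``mapping cylinders of PL homotopic maps are PL homeomorphic rel the target'' is false, and it is exactly this claim that carries all the weight. Concretely, take $d=2$ and $\sK$ a single $1$-simplex embedded in $\RR^2$; then $N$ is a disk and $c\colon\partial N=S^1\to|\sK|$ is the standard fold map, whose mapping cylinder is the disk $N$. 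A simplicial approximation $c'$ defined on a fine subdivision of $S^1$ (with target $\sK$ unsubdivided) is merely guaranteed to be close and homotopic to $c$; it may perfectly well wind back and forth along the edge, so that a generic interior point $m$ of the edge has four (or more) preimages. Then in $M(c')$ the link of $m$ is a graph with two vertices and four edges rather than a circle, so $M(c')$ is not even a $2$-manifold and in particular is not PL homeomorphic to $N$. Homotopy gives no control here (all maps to the contractible $|\sK|$ are homotopic), and PL homotopy does not help either. Thus choosing $c'$ so that $M(c')\cong N$ rel $|\sK|$ is not a consequence of simplicial approximation; arranging a simplicial map onto the \emph{unsubdivided} $\sK$ whose mapping cylinder is a regular neighborhood is essentially the hard content of the theorem, so at this point the argument begs the question.

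The remaining ingredients are standard or easily repaired: the mapping cylinder neighborhood theorem itself is fine, the collar manipulations in (ii) are routine, and the boundary normalization can in fact be obtained much more cheaply --- given \emph{any} simplicial $d$-ball $\mathsf{B}$ containing $\sK$ in its interior, cone off its boundary by a new vertex $v$ to get a simplicial $d$-sphere and delete the open interior of one facet containing $v$: the result is a simplicial $d$-ball with boundary combinatorially isomorphic to $\partial\Delta^d$ still containing $\sK$ in its interior. So the missing piece is precisely a correct argument producing a triangulated ball containing $\sK$ unsubdivided from a triangulation containing only a subdivision of $\sK$; this requires a different idea (this is what the cited proofs supply) and is not delivered by the homotopy-invariance step as written.
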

Indeed, consider a triangulation $\mathsf{T}$ of $M$ and a fixed facet $F$ in it. Now, the complexes illustrating the bounds in Main Theorems A and B PL embed in the interior of $F$. Hence, by Theorem~\ref{theorem_PL_extension_karim-geva}, for each of them, say $X$ corresponding to an $(s+1)$-uniform hypergraph, there exists a triangulation $\mathsf{T}(X)$ of $M$ containing $X$ as a subcomplex, obtained by subdividing the interior of $F$ appropriately.
Thus $\chi_s(M)\ge \chi(X)$, and the latter can be arbitrarily large.

\medskip 

We illustrate the new lower bounds of our Main Theorem in Table \ref{table_geometric_lower_bound}.
\begin{table}[h!]
	\centering
	\begin{tabular}{|c| c c c c c c |} 
		\hline 
	{$d$\textbackslash$k$} & {2} & {3} & {4} & {5} & {6} & {7} \\ \hline
2  & 4  & 2 & {$-$}  & {$-$} & {$-$} & {$-$}  \\
3  & {$\infty$}  & {$\infty$ \cite{coloring_d-embeddable}} & {3 (Thm C)}  & {$-$} & {$-$} & {$-$}  \\
4  & {$\infty$}  & {$\infty$ \cite{coloring_d-embeddable}} & {$\infty$ (Thm A)}  & 2 & {$-$} & {$-$}  \\ 
5  & {$\infty$}   & {$\infty$} & {$\infty$ \cite{coloring_d-embeddable}}   & {$\infty$ (Thm A)} & {3 (Thm C)} & {$-$}  \\
6  & {$\infty$}  & {$\infty$} & {$\infty$ \cite{coloring_d-embeddable}}  & {$\infty$ (Thm A)} & {$\infty$ (Thm A)} & 2 \\
7  & {$\infty$}  & {$\infty$} & {$\infty$}  & {$\infty$ \cite{coloring_d-embeddable}} & {$\infty$ (Thm A)} & {$\infty$ (Thm A)}\\
8  & {$\infty$}  & {$\infty$} & {$\infty$}  & {$\infty$ \cite{coloring_d-embeddable}} & {$\infty$ (Thm A)}  & {$\infty$ (Thm A)}\\ \hline
	\end{tabular}
	\caption{Currently known lower bounds on weak chromatic numbers for the class of $k$-uniform hypergraphs which are \textit{geometrically} embeddable in $\RR^d$. Note that when we consider PL embeddability, then all finite lower bounds other than the $d=2$ case will be changed into $\infty$ by Main theorem B.}
	\label{table_geometric_lower_bound}
\end{table}

Note that we still have a large gap between Main Theorems B and C. This motivates the following as a future research problems.

\begin{pro} \label{prob:Intro}
(1) Is $\chi_L(d+1, d)= \infty$ for all $d\geq 3$? 

(2) Let $\chi_P(d)$ be the supremum chromatic number over all hypergraphs which are the collections of maximal faces of the boundary complexes of $d$-polytopes, for a fixed $d\geq 4$. What is $\chi_P(d)$? Does $\chi_P(d)\longrightarrow\infty$ as $d\longrightarrow\infty$? 
 
\end{pro}

\textbf{Discussion of previous literature.}
It seems that previous related studies were motivated by different contexts. Probably the framework of \cite{coloring_d-embeddable} is the most similar with ours. In~\cite{coloring_d-embeddable}
lower and upper bounds on 
$\chi_L(k,d)$ and $\chi_{PL}(k,d)$ are given, 
focusing on $\chi_L(k,d)$. 
Their lower bounds are summarized in Table \ref{table_geometric_lower_bound}. Main theorems A, B and C can be considered as a continuation and improvements upon their results.

Another group of papers investigates coloring and choosability results for the \textit{face-hypergraph} of a graph embedded in a surface, regarding weak colorings \cite{coloring_surface_jctb, coloring_surface_answer, choosability_genus, Thomassen_list_coloring_2-sphere}. A face-hypergraph is constructed from a graph embedding where every face of the embedding corresponds to a hyperedge of the hypergraph consisting of the vertices incident to the face. Note that in this setting a graph embedding does not necessarily give a cell structure of a simplicial complex.
The simplicial direction is explored for general simplicial manifolds by Lutz and M\o ller \cite{coloring_surface_lutz}. They showed that (i) $\chi_2(M_g)$, where $M_g$ is a genus $g$ surface, tends to infinity as the genus $g$ tends to infinity, and (ii) $\chi_s(M)=\infty$ when $M$ is a triangulable $d$-dimensional manifold with $d\geq 3$ and $s \leq \lceil d/2\rceil$. Note that Main theorem D 
improves upon (ii). In particular, this also answers \cite[Question 6]{transversals_spheres_joseph_michael} which asks whether non 2-colorable simplicial $d$-spheres exist for $d>3$.

Our study is also related to the study of transversal numbers of geometric hypergraphs, 
studied in 
\cite{transversals_spheres_joseph_michael, novik_transversal_open_problem, cho2023transversal, novik2024transversalnumberssimplicialpolytopes} for simplicial spheres, and in \cite{nevo_stable_flag}, equivalently, for stable sets in flag spheres.
 The transversal number of a hypergraph $H$, denoted by $\tau(H)$, is the minimum size of a vertex subset which pierces every hyperedge. The transversal ratio of $H$ is the ratio $\tau(H)/|V(H)|$. Note that if $H$ is $m$-colorable then the transversal ratio of $H$ is bounded above by $\frac{m-1}{m}$. This implies that a lower bound on $\chi(H)$ is a necessary condition for corresponding lower bound on the transversal ratio of $H$.

\medskip 
\textbf{Outline.}
This manuscript is organized as follows. In Section \ref{section_codimension>=1}, we prove Main theorem A. In Section \ref{section_full_dimension-PL}, we prove Main theorem B. Finally in Section \ref{section_full_dimension-geometric}, we prove Main theorem C.

\section{\texorpdfstring{$\chi_L(k,d)$}{XL(k,d)} is unbounded when \texorpdfstring{$2 \leq k \leq d$}{2<=k<=d} and \texorpdfstring{$d\geq 3$}{d>=3}} \label{section_codimension>=1}
In this section, we prove Main theorem A. The proof is based on the family of hypergraphs defined by Ackerman, Keszegh and P\'alv\"olgyi \cite{ABAB_stabbed_pseudodisk} which the authors study 
in connection to colorings of $(AB)^l$-free hypergraphs and is extended from earlier constructions in \cite{ABABA_construction_pach, abstract_polychromatic}. We show that each such hypergraph can be geometrically embedded such that all its vertices are on the moment curve $\gamma_d(t)=(t, t^2, \dots, t^d)$. Rather interestingly, the $(AB)^l$-free property is related to this special type of embedding of the vertices into the moment curve. We first study the combinatorial criterion  for this embeddability concept in Subsection \ref{subsection_moment_curve}. In Subsection \ref{subsection_proof_Theorem A}, using the embeddability criterion, we show that the hypergraphs constructed in \cite{ABAB_stabbed_pseudodisk} are desired constructions for Theorem A.

\subsection{Combinatorial criterion for embeddability on the moment curve}
\label{subsection_moment_curve}

We say that a $k$-uniform hypergraph $H=(V,E)$ is \textit{geometrically embeddable in $\RR^d$} if there is a function $\phi:V\to \RR^d$ such that 
\begin{align}
&\dim \aff(\phi(e)) = k-1 \textrm{ for every $e\in E$, and} \label{eqn_embedding_cond_affine_ind}\\
&\conv(\phi(e_1)) \cap \conv(\phi(e_2))=\conv(\phi(e_1)\cap \phi(e_2)) \textrm{ for every $e_1, e_2 \in E$}. \label{eqn_embedding_condition}
\end{align}
Note that the inclusion $\supseteq$ in (\ref{eqn_embedding_condition}) always holds. When a $k$-uniform hypergraph $H$ is geometrically embeddable in $\RR^d$ such that the vertices of $H$ lie on $\gamma_d$ and appear on the curve in the order of $\prec$, we say that $H$ is \textit{embeddable on the moment curve $\gamma_d$ with respect to $\prec$}. 
Note that Condition (\ref{eqn_embedding_cond_affine_ind}) is redundant when we consider embedding on $\gamma_d$ as long as $k\leq d+1$.

Let $H$ be a hypergraph whose vertex set is equipped with a total order $\prec$. We say that $H$ is \textit{$l$-interlacing with respect to $\prec$} if there are distinct hyperedges $e_1$ and $e_2$ satisfying that 
\begin{align}
    &\textrm{there are vertices $v_1\prec v_2 \prec \cdots \prec v_l$ such that $v_i \in e_1$ when $i$ is odd} \nonumber\\
    	&\textrm{and $v_i \in e_2$ when $i$ is even.} \label{cond_interlacing} 
\end{align} Note that here $e_1$ and $e_2$ need not be disjoint. 

The following lemma shows a combinatorical characterization of embeddability on $\gamma_d$ using the interlacing property. This characterization was essentially introduced in  \cite{breen1973primitive} and has been used in several subsequent works dealing with geometric hypergraphs, e.g., \cite{upper_bound_pach, anshu_rectilinear}. We sketch the proof for completeness.

\begin{lemma} \label{lemma_interlacing}
	Let $k$ and $d$ be positive integers such that $k\leq d+1$. 
	For a $k$-uniform hypergraph $H$, let $\prec$ be a total order on the vertex set $V(H)$. Then, a hypergraph $H$ is embeddable on $\gamma_d$ with respect to $\prec$ if and only if $H$ is not $(d+2)$-interlacing with respect to $\prec$.
\end{lemma}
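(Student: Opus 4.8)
The plan is to reduce both implications to the moment-curve facts already available: Gale's evenness criterion packaged as Proposition~\ref{prop_d+2_moment_intersection_interlacing}, and the ``reduction to $d+2$ points'' of Proposition~\ref{prop_intersection_reduces_d+2}. First I would fix one concrete map $\phi\colon V(H)\to\gamma_d$ inducing the order $\prec$, say the one sending the $j$-th vertex in the order $\prec$ to $\gamma_d(j)$. Since $\phi$ is injective and $\phi(V(H))\subseteq\gamma_d$ is in general position, condition~(\ref{eqn_embedding_cond_affine_ind}) holds automatically because $k\le d+1$, and $\phi(e_1)\cap\phi(e_2)=\phi(e_1\cap e_2)$ for all $e_1,e_2\in E(H)$; moreover, as noted just before the lemma, whether (\ref{eqn_embedding_condition}) holds for this $\phi$ does not depend on the choice of order-preserving map. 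Hence ``$H$ is embeddable on $\gamma_d$ with respect to $\prec$'' is equivalent to ``$\conv(\phi(e_1))\cap\conv(\phi(e_2))=\conv(\phi(e_1\cap e_2))$ for all $e_1,e_2\in E(H)$'', and it suffices to prove the two contrapositives.

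For ``$H$ not embeddable $\Rightarrow$ $H$ is $(d+2)$-interlacing'': fix $e_1\ne e_2$ (they must be distinct, since the displayed equality is trivial when $e_1=e_2$) with $\conv(\phi(e_1))\cap\conv(\phi(e_2))\supsetneq\conv(\phi(e_1\cap e_2))$. I would apply Proposition~\ref{prop_intersection_reduces_d+2} to $P_i=\phi(e_i)$ (whose union is in general position, lying on $\gamma_d$) to get disjoint nonempty $Q_1\subseteq\phi(e_1)$, $Q_2\subseteq\phi(e_2)$ with $|Q_1\cup Q_2|=d+2$ and $\conv(Q_1)\cap\conv(Q_2)\ne\emptyset$. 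Proposition~\ref{prop_d+2_moment_intersection_interlacing} then forces $Q_1$ and $Q_2$ to be interlacing along $\gamma_d$: one of them contains every odd-indexed point and the other every even-indexed point in the $\gamma_d$-order of $Q_1\cup Q_2$. Pulling this order back through $\phi^{-1}$ and, if necessary, interchanging the roles of $e_1$ and $e_2$, we obtain vertices $v_1\prec\cdots\prec v_{d+2}$ with $v_i\in e_1$ for odd $i$ and $v_i\in e_2$ for even $i$; since $e_1\ne e_2$ this exhibits $H$ as $(d+2)$-interlacing with respect to $\prec$.

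For the converse ``$H$ is $(d+2)$-interlacing $\Rightarrow$ $H$ not embeddable'': given distinct $e_1,e_2$ and $v_1\prec\cdots\prec v_{d+2}$ witnessing it, put $A=\{v_i:i\text{ odd}\}\subseteq e_1$ and $B=\{v_i:i\text{ even}\}\subseteq e_2$, so $\phi(A)$ and $\phi(B)$ are disjoint, interlacing subsets of $\gamma_d$ with $|\phi(A)\cup\phi(B)|=d+2$. By Proposition~\ref{prop_d+2_moment_intersection_interlacing} there is a point $p_0\in\conv(\phi(A))\cap\conv(\phi(B))\subseteq\conv(\phi(e_1))\cap\conv(\phi(e_2))$. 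It remains to check that $p_0\notin\conv(\phi(e_1\cap e_2))$, which is what makes (\ref{eqn_embedding_condition}) fail for $\{e_1,e_2\}$. I would argue as follows: $\phi(e_1)$ has at most $k\le d+1$ points on $\gamma_d$, hence is affinely independent, so $p_0\in\conv(\phi(e_1))$ has a unique representation as an affine combination of $\phi(e_1)$; comparing supports, $p_0\in\conv(\phi(A))$ together with $p_0\in\conv(\phi(e_1\cap e_2))$ would force $p_0\in\conv(\phi(A\cap e_2))$, and symmetrically, using that $\phi(e_2)$ is affinely independent, $p_0\in\conv(\phi(B\cap e_1))$. But $A\cap e_2$ and $B\cap e_1$ are disjoint (since $A\cap B=\emptyset$), both nonempty (otherwise $p_0\in\conv\emptyset$), and both contained in $e_1\cap e_2$; thus $\phi\big((A\cap e_2)\cup(B\cap e_1)\big)$ is an affinely independent set two of whose disjoint nonempty subsets have intersecting convex hulls --- a contradiction.

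The only place I expect genuine difficulty is this last step. The naive argument produces merely $\conv(\phi(e_1))\cap\conv(\phi(e_2))\ne\emptyset$, which on its own is consistent with (\ref{eqn_embedding_condition}), because the interlacing vertices are permitted to lie in $e_1\cap e_2$; the support/affine-independence computation above is precisely what excludes the intersection point from being ``legitimately'' in $\conv(\phi(e_1\cap e_2))$, and it is exactly here that the hypothesis $k\le d+1$ enters. Everything else is bookkeeping with Propositions~\ref{prop_intersection_reduces_d+2} and~\ref{prop_d+2_moment_intersection_interlacing} together with the order-independence remark.
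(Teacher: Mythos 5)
Your proposal is correct and follows essentially the same route as the paper: one direction via Proposition~\ref{prop_intersection_reduces_d+2} followed by Proposition~\ref{prop_d+2_moment_intersection_interlacing}, and the converse by producing the interlacing pair $Q_1,Q_2$, getting a point $p_0\in\conv(Q_1)\cap\conv(Q_2)$, and excluding it from $\conv(\phi(e_1\cap e_2))$ using affine independence of each $\phi(e_i)$. Your support/barycentric-uniqueness computation for that last exclusion is just a rephrasing of the paper's identity $\conv(Q_i)\cap\conv(I)=\conv(Q_i\cap I)$ inside the simplex on $\phi(e_i)$, so the two proofs coincide in substance.
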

\begin{proof}
Let $\phi:V(H)\to \RR^d$ be a map into the moment curve $\gamma_d$ which respects $\prec$. 
Then condition (\ref{eqn_embedding_cond_affine_ind}) is always satisfied as points on the moment curve are in general position, thus $H$ is \emph{not} embeddable on $\gamma_d$ w.r.t. $\prec$ iff for some distinct faces $e_1, e_2\in H$ their images $\conv(\phi(e_1))$ and $\conv(\phi(e_2))$
intersect in their interiors, iff there exist disjoint subsets $f_1\subseteq e_1$ and $f_2\subseteq e_2$ such that   
$\conv(\phi(f_1))\cap\conv(\phi(f_2))$ is a single point lying in the interior of each, iff $\phi(f_1)$ and $\phi(f_2)$ have sizes summing up to $d+2$ and they form a primitive Radon partition; see~\cite[Thm.2]{Hare-Kenelly} (and also the discussion in \cite[Chap 6]{lectures_on_polytopes_book} and \cite[Lemma 3.1]{Dey_counting_triangulations}). By  \cite[Theorem]{breen1973primitive} this happens iff $f_1$ and $f_2$ interlace, iff $H$ is $(d+2)$-interlacing w.r.t. $\prec$.
\end{proof}

\subsection{Proof of Theorem A}
\label{subsection_proof_Theorem A}

For two hyperedges $A$ and $B$ in a hypergraph whose vertex set is totally ordered by $\prec$, the vertices $v_1 \prec \cdots \prec v_l$ of $A\cup B$ can be encoded as a word $w \in \{A, B, I\}^{|A\cup B|}$, where $w(i)=A$ if $v_i \in A\setminus B$, $w(i)=B$ if $v_i \in B\setminus A$, and $w(i)=I$ if $v_i \in A \cap B$ for $i\in [l]$ (here, $w(i)$ is the $i$th letter of $w$). We call $w$ the \textit{pattern of $A$ and $B$ with respect to $\prec$}. We omit the order if it is clear from the context. For a word $w$, we denote the word obtained by concatenating $w$ $k$ times by $w^k$.

\begin{eg}
	Let $A=\{v_1, v_2, v_5\}$ and $B=\{v_3,v_4, v_5\}$ where the vertices are ordered by their indices. Then the pattern of $A$ and $B$ is $AABBI$. It can be also written concisely as $A^2B^2I$. 
\end{eg}

\medskip

Now we recall the family of hypergraphs from \cite[Section 3]{ABAB_stabbed_pseudodisk} (see also \cite{ABABA_construction_pach, abstract_polychromatic}).

\smallskip 

Let $T(a,b)$ denote a full $b$-ary tree of depth $a-1$, that is, every non-leaf vertex has $b$ children and every leaf is at distance $a-1$ from the root. $H(a,b)$ is the hypergraph on the vertex set $V(T(a,b))$ whose hyperedges are of two types: either the set of all children of a non-leaf vertex of $T(a,b)$, or the vertex set of the unique path from a leaf to the root, equivalently a \textit{maximal chain} of $T(a,b)$.

\begin{lemma} \label{lemma_base_step_H(a,b)}
  When $3\leq a \leq b$, $H(a,b)$ is not $(a+2)$-interlacing with respect to any DFS (depth first search) order on $T(a,b)$.
\end{lemma}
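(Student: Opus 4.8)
The plan is to fix a DFS order $\prec$ on $T=T(a,b)$ and show that no two hyperedges $e_1,e_2$ of $H(a,b)$ can witness $(a+2)$-interlacing, i.e.\ there is no alternating subsequence $v_1\prec v_2\prec\cdots\prec v_{a+2}$ with $v_i\in e_1$ for odd $i$ and $v_i\in e_2$ for even $i$. First I would record the key structural fact about a DFS order: for any vertex $x$, the vertices of the subtree $T_x$ rooted at $x$ form a contiguous block in $\prec$, and the root of $T_x$ is the $\prec$-minimum of that block. I would then split into three cases according to the types of $e_1,e_2$ (both ``sibling sets'', both ``maximal chains'', or one of each), since the two hyperedge types behave very differently under $\prec$.

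The easy case is when both $e_1$ and $e_2$ are sibling sets, i.e.\ sets of all $b$ children of some non-leaf vertices $x_1,x_2$. If $x_1=x_2$ then $e_1=e_2$, excluded. If the subtrees $T_{x_1},T_{x_2}$ are disjoint then the blocks are disjoint and no interlacing of length $\ge 2$ using vertices of both is possible. If one contains the other, say $T_{x_2}\subseteq T_{x_1}$ with $x_2$ a descendant of $x_1$, then all of $e_2$ lies inside the block of a \emph{single} child $c$ of $x_1$ (the child on the path from $x_1$ to $x_2$), and $e_1\setminus\{c\}$ lies entirely before or entirely after that block except for $c$ itself; a short analysis shows the pattern of $e_1,e_2$ has at most one alternation, far short of $a+2\ge 5$. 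The case where both are maximal chains: a maximal chain is a nested sequence of subtree-roots $r_0\succ$-minimal, so two chains $C_1,C_2$ share an initial segment (down to some vertex $u$) and then descend into distinct children of $u$; after $u$ the two tails lie in disjoint blocks, and along the shared segment the vertices are common (type $I$). Hence the pattern of $C_1,C_2$ looks like $I^{j}$ followed by one block of $A$'s and one block of $B$'s in some order, again with at most one $A/B$ alternation.

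The genuinely delicate case is the mixed one: $e_1$ a maximal chain $C$ and $e_2$ a sibling set $S$ (the set of children of some $x$). Here I would track where $x$ sits relative to $C$. If $x\notin C$ and $T_x$ is disjoint from the block spanned by $C$, no interlacing. If $x\in C$, then exactly one child $c$ of $x$ lies on $C$; the children of $x$ are ordered $s_1\prec s_2\prec\cdots\prec s_b$ with $s_1=c$ (DFS visits the on-path child... actually one must be careful: $c$ is whichever child DFS descends into, so $c=s_m$ for some $m$, and the portion of $C$ below $x$ lies in the block of $s_m$, interleaved with the $s_i$ only as: $s_1,\dots,s_{m-1}$ all precede the tail of $C$, and $s_{m+1},\dots,s_b$ all follow it). So the combined word is $(\text{prefix of }C \text{ in }I)\cdot B^{m-1}\cdot(\text{stuff inside }T_{s_m})\cdot B^{b-m}$, where ``stuff inside $T_{s_m}$'' interleaves $s_m$ (a $B$, appearing first) with the strictly-below-$x$ tail of $C$ (all $A$'s). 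That inner part has pattern $B A^{t}$ for some $t$, contributing one alternation; glued with the outer $B$-blocks this still gives a bounded number of alternations $\le 3$ or so. The one subcase that needs real care, and which I expect to be the main obstacle, is $x$ a strict \emph{descendant} of the bottom vertex of $C$, or $x$ lying in a subtree hanging off $C$: then $T_x$ may be squeezed between two consecutive chain vertices $c_j\prec c_{j+1}$ of $C$ (with $c_{j+1}$ the $\prec$-successor giving the next chain vertex), so the pattern contributes $A\cdot B^{b}\cdot A$ locally — two alternations — and one must check globally, across the whole chain, that these cannot accumulate: the point is that \emph{all} of $S$ lies in one subtree $T_x$, hence between two consecutive chain vertices, so $S$ contributes a single contiguous $B$-block and the total alternation count of the pattern is at most two $A/B$-switches, i.e.\ the longest alternating subsequence has length at most $4<a+2$ since $a\ge 3$. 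Assembling the case bounds, the maximum alternating length over all choices of $e_1,e_2$ is at most $4\le a+1<a+2$, so $H(a,b)$ is not $(a+2)$-interlacing with respect to $\prec$.
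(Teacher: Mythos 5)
Your overall strategy --- fix a DFS order, use the fact that each subtree occupies a contiguous block whose $\prec$-minimum is its root, and case-split on the types of the two hyperedges to control the pattern --- is the same as the paper's. But there is a genuine gap in the chain--chain case, and it comes from how you count. By the definition of interlacing that you yourself state, a vertex lying in \emph{both} hyperedges may occupy any position of a witness $v_1\prec\cdots\prec v_{a+2}$, so the common ($I$) vertices contribute fully to interlacing; counting only ``$A/B$ alternations'' is not the right measure. Two distinct maximal chains share a common prefix of some length $j$ with $1\le j\le a-1$, and their pattern is $I^{j}A^{a-j}B^{a-j}$ (or with $A,B$ swapped); taking all $j$ common vertices plus one vertex from each tail gives an interlacing witness of length $j+2$, which can be as large as $a+1$. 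Hence your concluding claim that ``the maximum alternating length over all choices of $e_1,e_2$ is at most $4$'' is false (the paper even remarks that $(a+1)$-interlacing is attained by exactly this configuration), and your chain--chain paragraph, which records only ``at most one $A/B$ alternation'' and then feeds into the bound $4$, never supplies the estimate that is actually needed: since the chains are distinct, each has at least one private vertex, so $j\le a-1$ and the interlacing length is at most $j+2\le a+1<a+2$. This is the tight case of the lemma, so it cannot be absorbed into a constant bound.

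Two smaller inaccuracies, both repairable. In the mixed case your labelling of the word is off: the chain vertices at and above $x$ lie in the chain only, hence are letters $A$, not $I$, and the unique common vertex is the child $s_m$ of $x$ lying on the chain, which should be $I$, not $B$; with the correct labels the word is $A^{l}B^{m-1}IA^{a-l-1}B^{b-m}$, whose longest interlacing witness has length $4<a+2$ since $a\ge 3$. In the nested sibling--sibling case, the other children of $x_1$ need not lie entirely on one side of the block of $T_c$ --- they can straddle it --- but the pattern is still of the form $A^{\ast}B^{b}A^{\ast}$, giving at most $3$-interlacing, so that bound is unaffected.
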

\begin{proof}
For $T=T(a,b)$ we fix a DFS order $\prec_T$. We choose two distinct hyperedges $A$ and $B$ and consider the following exhaustive three cases. 

\smallskip

\textbf{Case I.} $A$ and $B$ are both maximal chains: The order $\prec_T$ implies that the pattern of the chains $A$ and $B$ is either of the form $I^lA^{a-l}B^{a-l}$ or $I^lB^{a-l}A^{a-l}$ for some $1\leq l \leq a-1$ ($l\ge 1$ as both $A$ and $B$ contain the root of $T$). So $\{A, B\}$ is at most $(a+1)$-interlacing (when $l=a-1$), but cannot be $(a+2)$-interlacing.

\smallskip 

\textbf{Case II.} Exactly one of $A$ and $B$ is a maximal chain: We may assume that $A$ is a maximal chain. When $A$ and $B$ intersect, the order $\prec_T$ implies that the pattern of $A$ and $B$ is in the form of $A^lB^rIA^{a-l-1}B^{b-r-1}$ where $1\leq l \leq a$ and $0 \leq r \leq b$ ($l\geq 1$ as the root of $T$ cannot be in $B$). Then $\{A,B\}$ is at most 4-interlacing, thus $\{A,B\}$ is not $(a+2)$-interlacing as $a \geq 3$. When $A$ and $B$ are disjoint, the pattern of $A$ and $B$ with respect to $\prec_T$ is $A^lB^bA^{a-l}$ for $1\leq l \leq a$,
hence $\{A,B\}$ is at most 3-interlacing.

\smallskip
		
\textbf{Case III.} Neither $A$ nor $B$ is a chain: Let $A$ and $B$ be the set of children of non-leaf vertices $v$ and $w$, respectively. 
If $w$ is a descendant of $v$, then $\prec_T$ implies that the pattern of $A$ and $B$ is $A^lB^bA^{b-l}$  where $0\leq l \leq b$. 
If $v$ is a descendant of $w$ the pattern of $A$ and $B$ with respect to $\prec_T$ is $B^lA^bB^{b-l}$ where $0\leq l \leq b$. In both cases $\{A,B\}$ is just at most 3-interlacing. Else, $v$ and $w$ are not in the same maximal chain, then the pattern of $A$ and $B$ is either $A^bB^b$ or $B^bA^b$ , hence $\{A,B\}$ is just at most 2-interlacing. In either case, $\{A,B\}$ is not $(a+2)$-interlacing for $a\geq 3$.
\end{proof}

For a positive integer $k \geq 3$, the $k$-uniform hypergraph $H^{k,m}$ and the order $\prec^{k, m}$ on $V(H^{k,m})$ are constructed recursively as follows.

\begin{itemize}
	\item We let $H^{k,2}=H(k,k)$. We fix a DFS order of $T(k,k)$ and denote it by $\prec^{k,2}$.
	
	\item Upon previously constructed $H^{k,m-1}$, we define $H^{k,m}$ as the hypergraph on the vertex set of $T=T(k, |V(H^{k,m-1})|)$ as follows. We make a copy $H^{k,m-1}_v$ of $H^{k,m-1}$ for every non-leaf vertex $v$ in $T$ where the vertices of $H^{k,m-1}_v$ are exactly the children of $v$ in $T$. The children of $v$ are ordered by $\prec^{k,m-1}_v$ which is isomorphic to $\prec^{k,m-1}$. The set of hyperedges of $H^{k,m}$ are (i) the collection of hyperedges of $H^{k,m-1}_v$ for each non-leaf vertex $v$ and (ii) the maximal chains of 
    $T$. We fix a DFS order $\prec^{k,m}$ of $T$ which is compatible with $\prec^{k,m-1}_v$ for every non-leaf vertex $v$ of $T$.
\end{itemize}

By definition, $H(k, |V(H^{k,m-1})|)$ and $H^{k,m}$ are constructed from the same tree $T=T(k, |V(H^{k,m-1})|)$, so they have the same vertex set $V(T)$. The difference is that, for each non-leaf vertex $v$ of $T$, while $H(k,|V(H^{k,m-1})|)$ takes all children of $v$ to be a single hyperedge, $H^{k,m}$ have several (smaller) hyperedges for the children of $v$, coming from $H^{k,m-1}_v$.

\medskip 

The following proposition, on colorability of these hypergraphs, is shown in \cite{ABAB_stabbed_pseudodisk}.

\begin{prop}[{\cite[Proposition 3.1]{ABAB_stabbed_pseudodisk}}] \label{prop_not-colorable}
	$H^{k,m}$ is not $m$-colorable.
\end{prop}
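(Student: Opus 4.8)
The plan is to prove Proposition~\ref{prop_not-colorable} by induction on $m$, and in each case to explicitly trace a monochromatic maximal chain in the underlying tree, descending one level at a time from the root. The recursive structure of $H^{k,m}$ is tailored so that, at each level, avoiding a monochromatic sibling-hyperedge forces the use of one fewer color among the children, and the whole process bottoms out with a monochromatic root-to-leaf chain.

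For the base case $m=2$, recall $H^{k,2}=H(k,k)$, so the underlying tree is $T(k,k)$, of depth $k-1$. I would suppose for contradiction that $c\colon V(T(k,k))\to[2]$ has no monochromatic hyperedge. Put $v_0$ equal to the root and $c_0=c(v_0)$, and select vertices $v_0,v_1,\dots,v_{k-1}$ along a chain with $c(v_i)=c_0$ for all $i$: if $i\le k-2$ then $v_i$ is a non-leaf, its $k$ children form a hyperedge of $H(k,k)$, which is not monochromatic, so with only two colors available some child $v_{i+1}$ has color $c_0$. Then $\{v_0,\dots,v_{k-1}\}$ is a maximal chain, monochromatic of color $c_0$, a contradiction.

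For the inductive step I would assume $H^{k,m-1}$ is not $(m-1)$-colorable and suppose $c\colon V(T)\to[m]$ has no monochromatic hyperedge, where $T=T(k,|V(H^{k,m-1})|)$, again of depth $k-1$. With $v_0$ the root and $c_0=c(v_0)$, I build a chain $v_0,\dots,v_{k-1}$ with all $c(v_i)=c_0$ as follows. For $i\le k-2$, the children of $v_i$ carry the copy $H^{k,m-1}_{v_i}\cong H^{k,m-1}$, and every hyperedge of $H^{k,m-1}_{v_i}$ is also a hyperedge of $H^{k,m}$, hence not monochromatic under $c$; thus $c$ restricted to the children of $v_i$ is a proper coloring of a copy of $H^{k,m-1}$, which by the inductive hypothesis cannot be done with $m-1$ colors, so it uses all $m$ colors; in particular some child $v_{i+1}$ of $v_i$ receives color $c_0$. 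As before, $\{v_0,\dots,v_{k-1}\}$ is then a monochromatic maximal chain of $T$, hence a monochromatic hyperedge of type (ii) of $H^{k,m}$, a contradiction.

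I do not expect a serious obstacle here: the content is the clever definition of $H^{k,m}$, and the proof is essentially bookkeeping. The one point to watch is that the traced monochromatic path lands exactly on a \emph{maximal} chain — this works because $T(k,\cdot)$ has depth $k-1$, so a root-to-leaf path has precisely $k$ vertices, and because the single statement ``$H^{k,m-1}$ is not $(m-1)$-colorable'' may be legitimately re-invoked at each of the $k-1$ descent steps (only the outer induction is on $m$). One should also confirm that the chain hyperedges of $T$ are available at every level of $H^{k,m}$, including the bottom step where $v_{k-1}$ is a leaf; this is immediate from the definition.
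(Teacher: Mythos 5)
Your proof is correct, and it is essentially the standard argument: the paper itself gives no proof of this proposition but cites \cite[Proposition 3.1]{ABAB_stabbed_pseudodisk}, whose reasoning is exactly your induction on $m$ — a proper coloring forces all $m$ colors to appear among the children of every non-leaf vertex (else the copy $H^{k,m-1}_v$ would be properly $(m-1)$-colored, contradicting the inductive hypothesis), so one can follow the root's color down the tree and produce a monochromatic maximal chain. Your bookkeeping about the depth $k-1$ (so the traced path is a maximal chain of size $k$) and the base case $H^{k,2}=H(k,k)$ is accurate, so there is nothing to fix.
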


Next we study interlacing for these ordered hypergraphs. 
\begin{prop} \label{prop_ABABA-free}
For every $k\geq 3$ and $m\geq 2$,  
 $H^{k,m}$ is not $(k+2)$-interlacing with respect to $\prec^{k,m}$.
\end{prop}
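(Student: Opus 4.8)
The statement to prove is Proposition~\ref{prop_ABABA-free}: for every $k\geq 3$ and $m\geq 2$, $H^{k,m}$ is not $(k+2)$-interlacing with respect to $\prec^{k,m}$. The natural approach is induction on $m$, with Lemma~\ref{lemma_base_step_H(a,b)} (applied with $a=k$, $b=k$, which is legal since $3\le k\le k$) providing the base case $m=2$ directly, as $H^{k,2}=H(k,k)$ and $\prec^{k,2}$ is a DFS order of $T(k,k)$.

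For the inductive step, fix $m\ge 3$ and write $N=|V(H^{k,m-1})|$, $T=T(k,N)$, so that $H^{k,m}$ lives on $V(T)$ and $\prec^{k,m}$ is a DFS order of $T$. The key tool is a trivial monotonicity principle for interlacing: if $A\subseteq A'$ and $B\subseteq B'$ with $A'\ne B'$, then an $l$-interlacing witness for $\{A,B\}$ is also one for $\{A',B'\}$; hence if $\{A',B'\}$ is not $l$-interlacing, neither is $\{A,B\}$. Now take two distinct hyperedges $A,B$ of $H^{k,m}$. Each is either a maximal chain of $T$, or a hyperedge of one of the copies $H^{k,m-1}_v$, in which case it is contained in $\tilde A_v$, the set of all children of the non-leaf vertex $v$ (itself a hyperedge of $H(k,N)$ on $V(T)$). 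This yields four cases.

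First, if both $A$ and $B$ are chains, they are chains of $H(k,N)$, so $\{A,B\}$ is not $(k+2)$-interlacing by Lemma~\ref{lemma_base_step_H(a,b)}. Second, if $A\subseteq\tilde A_v$ comes from a copy and $B$ is a chain, then $\{\tilde A_v,B\}$ is not $(k+2)$-interlacing by the same Lemma (note $\tilde A_v\ne B$, as a ``children'' hyperedge of $H(k,N)$ avoids the root while a chain contains it), so by monotonicity neither is $\{A,B\}$. Third, if $A\subseteq\tilde A_v$ and $B\subseteq\tilde A_w$ come from copies attached to \emph{distinct} vertices $v\ne w$, then $\tilde A_v\ne\tilde A_w$, and $\{\tilde A_v,\tilde A_w\}$ is not $(k+2)$-interlacing by Lemma~\ref{lemma_base_step_H(a,b)} (this is precisely Case III of its proof), so monotonicity again finishes the case. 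The fourth and only genuinely recursive case is when $A$ and $B$ are both hyperedges of the \emph{same} copy $H^{k,m-1}_v$: since $(V(H^{k,m-1}_v),\prec^{k,m-1}_v)$ is order-isomorphic to $(V(H^{k,m-1}),\prec^{k,m-1})$, the inductive hypothesis gives that $\{A,B\}$ is not $(k+2)$-interlacing with respect to $\prec^{k,m-1}_v$; and because $\prec^{k,m}$ restricted to the children of $v$ coincides with $\prec^{k,m-1}_v$ while $A\cup B$ lies among those children, the interlacing status of $\{A,B\}$ is unchanged when passing to $\prec^{k,m}$. This exhausts all cases.

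I expect the main difficulty to be purely bookkeeping: confirming that the hyperedges of $H^{k,m}$ are exactly the two claimed families, that each copy-hyperedge sits inside the corresponding $\tilde A_v$, and that the stated ``compatibility'' of $\prec^{k,m}$ with the orders $\prec^{k,m-1}_v$ delivers the needed agreement of restricted orders in the fourth case. The genuine combinatorics is entirely offloaded to Lemma~\ref{lemma_base_step_H(a,b)} through the monotonicity observation; the only points requiring care are that the enlarged hyperedges in the monotonicity step are never accidentally equal (checked above in each case), and that Lemma~\ref{lemma_base_step_H(a,b)} is legitimately invoked with the unequal parameters $a=k\le N=b$, which holds since $N=|V(H^{k,m-1})|\ge |V(T(k,k))|\ge k$.
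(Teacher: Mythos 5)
Your proposal is correct and follows essentially the same route as the paper: induction on $m$ with Lemma~\ref{lemma_base_step_H(a,b)} (at $a=b=k$) as the base, the same-copy case handled by the inductive hypothesis and order compatibility, and all cross cases reduced to Lemma~\ref{lemma_base_step_H(a,b)} by enlarging each hyperedge to the unique hyperedge of $H(k,|V(H^{k,m-1})|)$ containing it and using monotonicity of interlacing witnesses. The paper packages your Cases 1--3 into a single step via the map $f$, but the argument is the same.
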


\begin{proof}[Proof of Theorem A]
	Fix positive integers $d$, $k$ with $d\geq 3$ and $2 \leq k\leq d$. 
 The case $k=2$ is trivial, since a complete graph $K_n$ is embeddable on $\gamma_d$ for every $d\geq 3$ (and any order $\prec$ on $V(K_n)$). Let $k\geq 3$. 
 By Proposition \ref{prop_not-colorable}, it is enough to show that $H^{k,m}$ is embeddable on $\gamma_d$. By Lemma \ref{lemma_interlacing}, it is sufficient to show that $H^{k,m}$ is not $(d+2)$-interlacing. This follows directly from Proposition \ref{prop_ABABA-free} and the condition $k\leq d$.
\end{proof}

\begin{proof}[Proof of Proposition \ref{prop_ABABA-free}]
Fix a positive integer $k\geq 3$. We use induction on $m$. The base case $m=2$ follows from Lemma \ref{lemma_base_step_H(a,b)} by plugging $a=b=k$. Now, suppose $m>2$ and assume that $H^{k,m'}$ is not $(k+2)$-interlacing with respect to $\prec^{k,m'}$ for every $2\leq m'<m$.
 
     Let $A$ and $B$ be two distinct hyperedges of $H^{k,m}$. If $A$ and $B$ are contained in the same $H^{k,m-1}_v$, then by the induction hypothesis $\{A, B\}$ is not $(k+2)$-interlacing with respect to $\prec^{k,m-1}_v$.  So, $\{A, B\}$ is not $(k+2)$-interlacing with respect to $\prec^{k,m}$. Else, let $M=H(k,|V(H^{k,m-1})|)$ and $T=T(k,|V(H^{k,m-1})|)$. For a hyperedge $e$ of $H^{k,m}$ let $f(e)$ be the unique hyperedge of $M$ which contains $e$. (Thus $f(e)=e$ when $e$ is a maximal chain of $T$, and otherwise $f(e)$ strictly contains $e$.) Note that $f(A)\neq f(B)$ as $A$ and $B$ are not contained in the same $H^{k,m-1}_v$ for all vertices $v\in T$.
     By Lemma \ref{lemma_base_step_H(a,b)}, $\{f(A), f(B)\}$ is not $(k+2)$-interlacing with respect to $\prec^{k,m}$. Since $A\subseteq f(A)$ and $B \subseteq f(B)$, $\{A,B\}$ is not $(k+2)$-interlacing with respect to $\prec^{k,m}$ either. This completes the proof.
\end{proof}

\begin{remark}[Tightness] 
	In \cite{ABAB_stabbed_pseudodisk, abstract_polychromatic}, the authors studied $(AB)^l$-free hypergraphs. $(AB)^l$-free hypergraphs are defined similar to non-$(2l)$-interlacing hypergraphs: a hypergraph $H$ with fixed total order $\prec$ on its vertex set is \emph{$(AB)^l$-free} if it has no two hyperedges $A$ and $B$, and ordered vertex subset $\{v_1 \prec \dots \prec v_{2l}\}\subseteq A\cup B$ such that $v_i\in A\setminus B$ when $i$ is odd and $v_i \in B\setminus A$ when $i$ is even. Thus, the non-interlacing property is stronger. In \cite{ABAB_stabbed_pseudodisk}, it is shown that the hypergraphs $H^{k,m}$ are $ABABA$-free. Note that $H^{k,m}$ is not $(k+2)$-interlacing but is $(k+1)$-interlacing as shown in Case I of the proof of Proposition \ref{prop_ABABA-free}. 
 Note that, when $d\geq 2$, $H^{d+1,m}$ contains ``a book with three pages", namely a subhypergraph consisting of 3 hyperedges whose pairwise intersections are all equal to the same subset of size $d$; hence $H^{d+1,m}$ is not embeddable in $\RR^d$ when $d\geq 2$. 
\end{remark}

As an approach to Problem~\ref{prob:Intro}, we propose the following problem.
\begin{pro}
	Is there a family of $(d+1)$-uniform hypergraphs with unbounded chromatic numbers where each hypergraph is embeddable on $\gamma_d$?
\end{pro}

\begin{remark}
	In \cite[Sec.7]{ABAB_stabbed_pseudodisk}, the authors ask if there is a connection between Radon partitions and $(AB)^l$-freeness in higher dimensions. Lemma \ref{lemma_interlacing} describes such connection between the related interlacing property and Radon partitions. 
\end{remark}

\section{\texorpdfstring{$\chi_{PL}(d+1,d)$}{XPL(d+1,d)} is unbounded when \texorpdfstring{$d\geq 3$}{d>=3}}
\label{section_full_dimension-PL}

In this section, we prove Main theorem B.

First we recall some definitions: a hypergraph $H$ is \textit{linear} if every pair of distinct hyperedges of $H$ has intersection of size at most one. 
A simplicial (or polyhedral) complex $\mathsf{K}$ is \textit{PL embeddable} in $\RR^d$ if there is a simplicial subdivision $\mathsf{L}$ of $\mathsf{K}$ which is geometrically embeddable in $\RR^d$. The proof of Theorem B relies on the following lemma.
\begin{figure}[ht]
\centering
\includegraphics{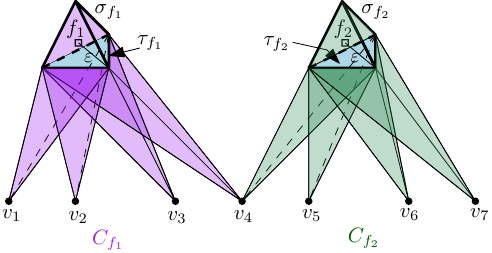}
\caption{Illustration of the construction in the proof of Lemma \ref{lemma_linear_PL-embed} when $d=3$. Here, $f_1=\{v_1, v_2, v_3, v_4\}$ and $f_2=\{v_4, v_5, v_6, v_7\}$. The tetrahedrons $\sigma_{f_i}$ are indicated by bold edges, and their respective facets $\tau_{f_i}$ are indicated by light blue.}
 \label{fig:PL}
\end{figure}

\begin{lemma} \label{lemma_linear_PL-embed}
	For $d \geq 3$, every linear $(d+1)$-uniform hypergraph is PL embeddable in $\RR^d$.	
\end{lemma}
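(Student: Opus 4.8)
The plan is to realise $\mathsf{K}$, the $d$-dimensional simplicial complex whose facets are the hyperedges of $H$, by placing one genuine geometric $d$-simplex for each hyperedge, keeping these simplices pairwise disjoint, and then re-gluing them at the required common vertices through thin ``tentacles''. Note first that, since $H$ is linear, any two facets of $\mathsf{K}$ meet in at most one vertex. For each hyperedge $f$ choose a geometric $d$-simplex $\Delta_f\subset\RR^d$, with the $\Delta_f$ pairwise disjoint (say, centred at far-apart points), and for each vertex $v$ of $H$ choose a point $\hat v\in\RR^d$, with the $\hat v$ pairwise distinct and disjoint from all the $\Delta_f$. For an incidence $v\in f$, let $x_{v,f}$ be the vertex of $\Delta_f$ that corresponds to $v$.

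The main step is to join, for every vertex $v$, the points $x_{v,f}$ over all $f\ni v$ to the single point $\hat v$ by PL arcs $\alpha_{v,f}$ that are ``almost disjoint'': arcs attached to the same $v$ meet only at $\hat v$; arcs attached to distinct vertices are disjoint; and each $\alpha_{v,f}$ meets $\bigcup_g\Delta_g$ only at its initial point $x_{v,f}\in\Delta_f$. This is exactly where the hypothesis $d\ge 3$ is used: the union of all these arcs is a finite graph (a disjoint union of stars, one star per vertex $v$ with centre $\hat v$), and such a graph admits a PL embedding into $\RR^d$ avoiding any prescribed finite family of pairwise disjoint closed $d$-balls, because the complement of finitely many disjoint balls in $\RR^d$ is path-connected for $d\ge 2$, while for $d\ge 3$ two generic PL arcs, even sharing one endpoint, have no other intersection since $1+1<d$; for $d=2$ this routing is genuinely obstructed. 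One then thickens each $\alpha_{v,f}$ to a thin PL $d$-ball $T_{v,f}$, a regular neighbourhood of $\alpha_{v,f}$ that tapers to the single point $\hat v$ at its far end and meets $\Delta_f$ in a small $(d-1)$-disc of $\partial\Delta_f$ around $x_{v,f}$; if the tentacles are thin enough, the only intersections that survive are $T_{v,f}\cap T_{v,g}=\{\hat v\}$ when $f\ne g$ both contain $v$.

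Finally set $X_f:=\Delta_f\cup\bigcup_{v\in f}T_{v,f}$. Attaching $d+1$ fingers to the $d$-ball $\Delta_f$ along disjoint boundary discs yields again a PL $d$-ball, and there is a PL homeomorphism $h_f\colon\sigma_f\to X_f$ sending the vertex $v$ of $\sigma_f$ to $\hat v$ for each $v\in f$ (prescribe the matching of the $d+1$ tentacle tips on the boundary spheres and extend over the balls). By construction $X_f\cap X_g=\{\hat v\}$ when $f\cap g=\{v\}$ and $X_f\cap X_g=\emptyset$ otherwise, so the $h_f$ agree on overlaps and assemble into a PL homeomorphism $h\colon|\mathsf{K}|\to X:=\bigcup_f X_f\subset\RR^d$. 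Since $h$ is PL, it is affine on the simplices of some subdivision $\mathsf{L}$ of $\mathsf{K}$, and then $h(\mathsf{L})$ is a geometric simplicial complex in $\RR^d$ realising $\mathsf{L}$; hence $\mathsf{K}$ is PL embeddable in $\RR^d$. I expect the one genuinely delicate point to be the disjoint routing and thinning of the tentacles in the middle step; the assembly into a PL ball and the passage to an affine subdivision are routine PL topology of balls.
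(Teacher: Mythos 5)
Your construction is correct and follows the same core strategy as the paper's proof: the vertices of $H$ become points of $\RR^d$, each hyperedge $f$ is inflated into a $d$-dimensional cell consisting of a small central $d$-simplex with $d+1$ appendages reaching the points of $f$, linearity ensures two cells meet in at most the one shared vertex point, and each cell is identified PL-homeomorphically with a $d$-simplex respecting the vertex labels, so the pieces assemble into a PL embedding. The difference lies in execution. The paper embeds the vertex--hyperedge incidence graph with straight edges in general position (this is where $d\ge 3$ enters there) and takes the appendages to be the straight cones $\conv(v\cup\tau_f)$ over a fixed facet $\tau_f$ of the central simplex; the resulting cell is star-shaped, and the identification with a subdivided simplex is produced explicitly via radial projections onto auxiliary polytopes, yielding the subdividing complex concretely. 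You instead route PL arcs to the vertex points, using general position of $1$-dimensional polyhedra in $\RR^d$ for $d\ge 3$, thicken them to tapered fingers, and quote standard PL machinery: regular neighbourhoods, the fact that two PL $d$-balls glued along a common boundary $(d-1)$-disc form a PL $d$-ball, homogeneity of PL spheres of dimension at least $2$, and extension of boundary homeomorphisms over balls. Your route is shorter but less self-contained; the paper's radial-projection argument is more elementary and hands-on. Two details to make precise if you write yours up: take $T_{v,f}$ to be a regular neighbourhood of $\alpha_{v,f}$ in the closed complement of $\bigcup_g\Delta_g$ (so that it meets $\Delta_f$ in a $(d-1)$-disc of $\partial\Delta_f$ around $x_{v,f}$, rather than engulfing a $d$-dimensional corner of $\Delta_f$ at that vertex), and note that prescribing the images of the $d+1$ vertices on $\partial X_f$ uses that $\partial X_f$ is a PL $(d-1)$-sphere with $d-1\ge 2$.
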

\begin{proof}
Let $H$ be a linear $(d+1)$-uniform hypergraph. Incidences between facets and vertices in $H$ can be recorded as a bipartite graph $G$, where one color class $F$ represents the set of hyperedges of $H$, and the other $V$ represents the set of vertices of $H$. We embed $G$ into $\RR^d$ with straight lines (which is possible because $d\geq 3$) so that the vertices are in general position. 
We abuse notation and identify each vertex of $G$ with its location in $\RR^d$ and each vertex $f\in F$ with its corresponding hyperedge in $H$.
Now ``inflate" each vertex $f\in F$ into a $d$-dimensional polyhedral cell $C_f$, in the following way: place a regular $d$-simplex $\sigma_f$ of radius $\epsilon>0$ whose center of mass is $f$, and fix a facet $\tau_f$ of $\sigma_f$. Let $C_f$ be the subspace $\sigma_f\cup\bigcup_{v\in f}\conv(v\cup \tau_f)$.\footnote{In fact fixing $\tau_f$ is not important; one could replace $\conv(v\cup \tau_f)$ with $\conv(v\cup \tau_v)$ for any facet $\tau_v$ of $\sigma_f$ and the analysis of the resulted construction would work the same.}
For $\epsilon>0$ small enough, for each pair of edges $f,g\in H$, $C_f\cap C_g=f\cap g$, which is either the empty set or a single vertex shared by both, as $H$ is linear. See Figure~\ref{fig:PL} for illustration. 

It is left to show that $C_f$ has a simplicial subdivision which is combinatorially isomorphic to a subdivision of the $d$-simplex whose vertices are the vertices of $f$; compare with Hudson's
~\cite[I.Lem.1.12]{piecewise_linear_hudson} for the convex polytope case. 
To see this, we first give a simplicial subdivision $\mathsf{K}_f'$ of the boundary $\partial C_f$. This is possible because $\partial C_f$ is a compact polyhedron (see \cite[Theorem 2.11]{pl_book_rourke_sanderson} stating that \textit{any compact polyhedron is the underlying polyhedron of some simplicial complex}): the finite intersection or union of compact polyhedra is again a polyhedron \cite[1.3.(3),(5)]{pl_book_rourke_sanderson} and $\partial C_f$ can be expressed 
using finite intersections and unions of compact polyhedra.

Now, pick a point in the relative interior of  the facet $\tau_f$, and in case $\tau_f$ is contained in the boundary of $C_f$ perturb this point slightly to obtain an interior point $x$ of $C_f$, so that $C_f$ is star shaped at $x$, namely, for every point $y\in C_f$ the line segment $[x,y]$ is contained in $C_f$. Indeed, such $x$ exists since $C_f$ is the union of $d+2$ $d$-simplices all sharing the same facet $\tau_f$. We may additionally assume that $x$ and the vertices of $G$ are in general position.

We denote the vertices of $f$ by $v_1, \dots, v_{d+1}$. By the general position assumption, $\conv\{v_1, \dots, v_{d+1}\}$ is a $d$-simplex which we call $P$. We also introduce another polytope $Q$ which is differently defined by two cases: When $x \notin P$, we find an additional point $w$ so that $\conv\{v_1, \dots, v_{d+1}, w\}$ contains $x$ in its interior. Let $Q=\conv\{v_1, \dots, v_{d+1}, w\}$ in this case. When $x \in P$, we let $Q=P$. We may assume that all faces of $Q$ are simplices  in both cases.

Let $\pi_x: \partial C_f \to \partial Q$ be the radial projection with respect to $x$. Note that $\pi_x$ is well-defined and a homeomorphism because $x$ is contained in the interior of $Q$ in both cases. Then
\[\mathsf{K}_Q'=\{\sigma\cap \pi_x(\tau): \textrm{$\sigma$ is a proper face of $Q$, } \tau\in \mathsf{K}_f'\} \]
is a collection of polytopal cells whose union is  $\partial Q$. Hence $\partial Q$ has a simplicial subdivision $\mathsf{K}_Q$ which contains a subdivision of each convex cell in $\mathsf{K}_Q'$ (see \cite[Lemma 1.5]{piecewise_linear_hudson}), e.g. the derived subdivision of $\mathsf{K}_Q'$. When $Q=P$, we let $\mathsf{K}_P=\mathsf{K}_Q$. When $P \subsetneq Q$, we choose an interior point $y$ of $P$ and let $\pi_y: \partial Q \to \partial P$ be the radial projection with respect to $y$ which is again a homeomorphism. Similarly with the above, we project down $\mathsf{K}_Q$ to $\partial P$ via $\pi_y$ and make a simplicial subdivision $\mathsf{K}_P$ which is a common refinement of the boundary complex of $P$ and of $\pi_y(\mathsf{K}_Q)$.
 
 In each of the two cases, we map $\mathsf{K}_P$ back to $\partial C_f$ via the inverse map $\pi_x^{-1}$ or $(\pi_y\circ\pi_x)^{-1}$, and this gives a simplicial subdivision $\mathsf{K}_f$ of $\partial C_f$ which is a refinement of $\mathsf{K}_f'$. It is easy to see that $\mathsf{K}_f$ is combinatorially isomorphic to $\mathsf{K}_P$, and it can be extended to the whole $C_f$ and $P$ by coning the complexes over suitable interior points, either $x$ or $y$. 
 
 Thus, we described a PL embedding of $H$.
\end{proof}

A second ingredient is the Hales-Jewett theorem \cite{hales-jewett}. For positive integers $t, n$, a subset $L$ of the $n$-dimensional hypercube $[t]^n$ is called a \textit{combinatorial line} if there exist a non-empty set of indices $I = \{i_1, . . . , i_k\} \subseteq [n]$ and a choice of $a_i \in [t]$
for every $i \in [n] \setminus I$ such that
$$L = \{(x_1, \dots, x_n) \in [t]^n : x_{i_1} = \cdots = x_{i_k} \textrm{ and } x_i = a_i \textrm{ for } i \notin I\}.$$
The indices in $I$ (outside of $I$) are called the \textit{active} (\textit{fixed}, resp.) coordinates of $L$.
Let us call the hypergraph on $[t]^n$ whose hyperedges are exactly combinatorial lines of $[t]^n$ the \textit{Hales-Jewett hypergraph} on $[t]^n$. 

\begin{thm}[Hales-Jewett] \label{thm_Hales-Jewett}
	For every positive integers $t$ and $m$ there exists $n$ such that every $m$-coloring of $[t]^n$ contains a monochromatic combinatorial line. In other words, the Hales-Jewett hypergraph on $[t]^n$ is not $m$-colorable.
\end{thm}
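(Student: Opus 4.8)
The plan is to prove the Hales--Jewett theorem by the classical double induction of Hales and Jewett, organized through the device of \emph{color focusing}. Write $HJ(t,m)$ for the least dimension $n$ that works; our task is to show it is finite. The outer induction is on the alphabet size $t$, with base case $t=1$ trivial, since $[1]^n$ is a single point, which is already a degenerate combinatorial line, so $HJ(1,m)=1$. For the inductive step I fix $t\ge 2$ and assume $HJ(t-1,k)<\infty$ for \emph{every} number of colors $k$; the goal is to deduce $HJ(t,m)<\infty$. The reason a larger color count is available in the hypothesis is that the step will apply the $(t-1)$-case to an auxiliary coloring with many colors.

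The engine is the notion of color-focused lines. For a combinatorial line $\ell$ and $s\in[t]$, call the point of $\ell$ with all active coordinates equal to $s$ its \emph{level-$s$ point}; the level-$t$ point is the \emph{focus}. Given an $m$-coloring $c$, I say lines $\ell_1,\dots,\ell_j$ are \emph{color-focused} at a point $z$ if every $\ell_i$ has focus $z$, each $\ell_i$ is monochromatic of some color $d_i$ on its levels $1,\dots,t-1$, and $d_1,\dots,d_j$ are pairwise distinct. The key observation to record first is: if $j=m$ color-focused lines exist, then $c(z)=d_i$ for some $i$, since only $m$ colors are available; hence that $\ell_i$ is entirely $d_i$-colored and is a monochromatic line. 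Thus it suffices to manufacture $m$ color-focused lines.

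I would prove, by an inner induction on $j$ with $1\le j\le m$, the statement $S(j)$: there is a finite dimension $n_j$ such that every $m$-coloring of $[t]^{n_j}$ admits either a monochromatic line or $j$ color-focused lines. For $S(1)$ I restrict the coloring to $[t-1]^n\subseteq[t]^n$ and apply $HJ(t-1,m)$ with $n=HJ(t-1,m)$ to obtain a monochromatic $(t-1)$-alphabet line; the same active set and (fixed) coordinates define a line in $[t]^n$ whose levels $1,\dots,t-1$ recover the monochromatic one, giving a single color-focused line. For the step $S(j)\Rightarrow S(j+1)$ I split coordinates into two blocks, $[t]^{p+q}=[t]^p\times[t]^q$, with $p=n_j$ from $S(j)$ and $q=HJ(t-1,m^{t^p})$. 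Coloring $[t-1]^q$ by the tuple $y\mapsto (c(x,y))_{x\in[t]^p}$, an $m^{t^p}$-coloring, and applying $HJ(t-1,\cdot)$ produces a $(t-1)$-alphabet line $L'$ along which $c(x,\cdot)$ is constant for \emph{every} $x$; I extend $L'$ to a line $L$ in $[t]^q$ with focus $z_L$. Fixing $w\in L'$ and setting $c'(x)=c(x,w)$ yields a well-defined $m$-coloring $c'$ of $[t]^p$, to which $S(j)$ applies.

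Finally I would carry out the focusing itself. If $c'$ has a monochromatic line $\{x_1,\dots,x_t\}$, then $\{(x_1,w),\dots,(x_t,w)\}$ is a monochromatic line of $c$. Otherwise $c'$ yields $j$ color-focused lines $\ell_1,\dots,\ell_j$ at $z$ with distinct colors $d_1,\dots,d_j$; set the common focus $F=(z,z_L)$. For $i\le j$ let $\tilde\ell_i$ have active set equal to the union of the active sets of $\ell_i$ and of $L$ (so both blocks advance in lockstep), and let $\tilde\ell_{j+1}$ move only in the $L$-coordinates with its first block fixed at $z$. Constancy of $c$ along $L'$ shows each $\tilde\ell_i$ ($i\le j$) is monochromatic of color $d_i$ on levels $1,\dots,t-1$, while $\tilde\ell_{j+1}$ has color $c'(z)$ there, and all have focus $F$. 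If $c'(z)=d_i$ for some $i$, then $\ell_i$ is already monochromatic for $c'$, producing a monochromatic line of $c$; otherwise $d_1,\dots,d_j,c'(z)$ are $j+1$ distinct colors and $\tilde\ell_1,\dots,\tilde\ell_{j+1}$ witness $S(j+1)$. Taking $j=m$ in $S(j)$ and invoking the key observation gives $HJ(t,m)\le n_m<\infty$, closing the induction on $t$. The hard part will be exactly this focusing construction: aligning the $j$ lines inherited from the first block with the new line built from $L$ at a single focus $F$ while preserving every color class, and recognizing that a collision of the focus color $c'(z)$ with a pre-existing $d_i$ is not a breakdown but instead directly delivers the desired monochromatic line.
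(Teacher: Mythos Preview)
The paper does not prove Theorem~\ref{thm_Hales-Jewett}; it is quoted as a classical result (with a citation to Hales and Jewett) and used as a black box in the proof of Main Theorem~B. So there is no ``paper's own proof'' to compare against.

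That said, your argument is the standard Hales--Jewett proof via color focusing and is correct. The double induction is set up properly: the outer induction on $t$ with the hypothesis that $HJ(t-1,k)<\infty$ for \emph{all} $k$ is exactly what is needed, since the inductive step applies the $(t-1)$-case with the inflated color count $m^{t^{p}}$. The inner induction on $j$ is also sound; the product construction $[t]^{p+q}$ with $q=HJ(t-1,m^{t^p})$ and the ``slice coloring'' $y\mapsto (c(x,y))_{x\in[t]^p}$ correctly freezes the coloring along the auxiliary line $L'$, so that the lifted lines $\tilde\ell_i$ inherit the colors $d_i$ on levels $1,\dots,t-1$ and share the common focus $F=(z,z_L)$. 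One small redundancy: once you are in the branch ``$c'$ has no monochromatic line,'' the sub-case $c'(z)=d_i$ cannot occur (it would make $\ell_i$ monochromatic for $c'$), so you could drop that check; but handling it explicitly does no harm. The base case $t=1$ is fine under the paper's definition of a combinatorial line, since any nonempty $I\subseteq[n]$ produces a single-point line in $[1]^n$, which is vacuously monochromatic.
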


\begin{proof}[Proof of Theorem B]
	By Lemma \ref{lemma_linear_PL-embed} and the Hales-Jewett theorem \ref{thm_Hales-Jewett} with $t=d+1$, it is enough to show that the Hales-Jewett hypergraph on $[t]^n$ is linear for every positive integers $t$ and $n$. Suppose otherwise that two distinct combinatorial lines $L_1$ and $L_2$ intersect at two vertices. Then, by comparing the values of those two vertices in each coordinate, $L_1$ and $L_2$ must have the same active and fixed coordinates, and further, at the fixed coordinates, the fixed values $a_i$ must be the same for $L_1$ and $L_2$. This means that $L_1$ and $L_2$ are determined by the same conditions, hence $L_1=L_2$, a contradiction.
\end{proof}

\begin{remark} \label{remark_more_linear_hypergraphs}
	In the literature, there are more examples of classes of linear hypergraphs which have unbounded chromatic number. For example, a \textit{balanced incomplete block design}  of order $n$, block size $k$ and index $\lambda$ (or \textit{$(n,k,\lambda)$-BIBD} in short) is a $k$-uniform hypergraph on $n$ vertices and each unordered pair of vertices is contained in exactly $\lambda$ hyperedges. It is easy to see that a $(n,k,1)$-BIBD is a linear hypergraph. It is known that for fixed $m \geq 2$, $k \geq 3$ and $\lambda$, there is a $(n,k,\lambda)$-BIBD for a suitable $n$ which is not $m$-colorable, see \cite{BIBD_coloring_Horsley-Pike, BIBD_blocksize_4, BIBD_steiner_triple}.
\end{remark}

\section{\texorpdfstring{$\chi_L(d+1,d)\geq 3$}{XL(d+1,d)>=3} for odd \texorpdfstring{$d \geq 3$}{d>=3}}
\label{section_full_dimension-geometric}
In this section, we prove Main theorem C by constructing a non 2-colorable $(d+1)$-uniform hypergraph $L_d$ which is geometrically embeddable in $\RR^d$ for every odd $d \geq 3$. 

We use abstract join and geometric join for the construction; both are denoted by the same symbol $*$, but it will be clear which join operator it means by context.
For hypergraphs $H_1, \dots, H_l$ on disjoint ground sets, the \textit{abstract join} of $H_1, \dots, H_l$ is
\[H_1*\cdots *H_l=\{e_1 \cup \cdots \cup e_l: e_i \in H_i \textrm{ for all $i\in [l]$}\}.\]
The \textit{geometric join} of subsets $S_1, \dots, S_l\subseteq \RR^d$ is defined as: 
\[S_1* \cdots * S_l=\left\{\sum_{i=1}^l \lambda_i p_i: \sum_{i=1}^l \lambda_i=1, \lambda_i\geq 0, \textrm{ and } p_i \in S_i \textrm{ for every $i \in [l]$}\right\}.\]
We say that subsets $S_1, \dots, S_l\subseteq \RR^d$ are in \textit{skewed position} if
\[\dim\left(\aff\left(\bigcup_{i=1}^lS_i\right)\right)=l-1+\sum_{i=1}^l \dim(\aff(S_i)).\]
Note that every $k$ disjoint edges in the boundary of the cyclic polytope $C_{2k}(n)$ are in skewed position, 
and their images under the projection from $\RR^{2k}$ to $\RR^{2k-1}$ that forgets the last coordinate are in skewed position, with vertices on $\gamma_{2k-1}$, and with geometric join a $(2k-1)$-simplex.

\subsection{Construction of \texorpdfstring{$L_d$}{Ld}}
First, we construct $L_d$. Fix a positive integer $k \geq 2$ and set $d=2k-1$.

\begin{enumerate}[(1)]
	\item For $i\in [d]=[2k-1]$, let
	$$V_i=\{v_1^i, \dots, v_{4k-2}^i\}$$
	be a set of $4k-2$ vertices. Let $P_i$ be the edge set of the path on $V_i$ where the vertices appear in order, that is,
	$$P_i=\{\{v_1^i,v_2^i\}, \dots, \{v_{4k-3}^i,v_{4k-2}^i\}\}.$$
	
	\item Let
	$$K_1=\bigcup_{1\leq i_1<\cdots <i_k\leq d}P_{i_1} * \cdots * P_{i_k},$$
	where $*$ denotes the abstract join operation.
	Note that $K_1$ is a $(d+1)$-uniform hypergraph.
	
	\item We define $K_2$ in a similar way. Let
	\begin{align*}
	W_i&=\{w_1^i, \dots, w_{4k-2}^i\}, \textrm{ and} \\
	Q_i&=\{\{w_1^i,w_2^i\}, \dots, \{w_{4k-3}^i,w_{4k-2}^i\}\}
	\end{align*}
	for every $i \in [d]$. Then, let
$$K_2=\bigcup_{1\leq i_1<\cdots <i_k\leq d}Q_{i_1} * \cdots * Q_{i_k}.$$
	
	\item Let
	\begin{align*}
	M_1&=\{\{v_1^i, v_3^i, \dots, v_{2k-1}^i\}, \{v_{2k}^i, v_{2k+2}^i, \dots, v_{4k-2}^i\}: i \in [d]=[2k-1]\}, \textrm{ and}\\
	M_2&=\{\{w_1^i, w_3^i, \dots, w_{2k-1}^i\}, \{w_{2k}^i, w_{2k+2}^i, \dots, w_{4k-2}^i\}: i \in [d]=[2k-1]\}.
	\end{align*}
	That is, $M_1$ is the collection whose elements are the subset of $V_i$ which consists of the vertices with the first $k$ odd indices and the subset of $V_i$ which consists of the vertices with the last $k$ even indices for every $i\in [d]$. $M_2$ is defined similarly. Note that $M_1$ and $M_2$ are $k$-uniform hypergraphs. 
	
	\item Finally, we define
	$$ L_d=K_1 \cup K_2 \cup M_1*M_2. $$
	Note that $L_d$ is a $(d+1)$-uniform hypergraph.
\end{enumerate}
\subsection{Non-2-colorability of \texorpdfstring{$L_d$}{Ld}}
\begin{prop} \label{prop_Ld_not-2-colorable}
Let $d\ge 3$ be odd. Then 	
 $L_d$ is not 2-colorable.
\end{prop}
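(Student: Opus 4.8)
The plan is to argue by contradiction: suppose $c: V(L_d) \to \{1,2\}$ is a proper $2$-coloring, and derive a contradiction by combining constraints coming from $K_1$, $K_2$, and $M_1 * M_2$. The core observation I would isolate first is a statement about a single path $P_i$ (or $Q_i$) on $4k-2$ vertices: since the edges of $P_i$ appear as "factors" in the joins defining $K_1$, and a hyperedge of $K_1$ is monochromatic precisely when all $k$ chosen path-edges $\{v^{i_j}_\ast, v^{i_j}_{\ast+1}\}$ receive the same single color, properness of $c$ on $K_1$ forces a joint condition across the $k$ indices $i_1 < \dots < i_k$. Concretely, for each color $\alpha \in \{1,2\}$ and each index $i$, consider whether $P_i$ contains a \emph{monochromatic edge in color $\alpha$}, i.e. an edge $\{v^i_t, v^i_{t+1}\}$ with $c(v^i_t) = c(v^i_{t+1}) = \alpha$. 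If there exist $k$ distinct indices $i_1, \dots, i_k$ such that each $P_{i_j}$ has a monochromatic $\alpha$-edge, then picking one such edge in each factor yields a monochromatic hyperedge of $K_1$ in color $\alpha$ — contradiction. Hence \textbf{for each color $\alpha$, at most $k-1$ of the paths $P_1, \dots, P_d$ contain a monochromatic $\alpha$-edge}; since $d = 2k-1$, by pigeonhole there is at least one index $i$ for which $P_i$ has \emph{no} monochromatic edge in either color, i.e. $c$ restricted to $V_i$ is a proper $2$-coloring of the path, meaning it alternates: $c(v^i_t) \neq c(v^i_{t+1})$ for all $t$. Thus on this alternating index $i$, the color $c(v^i_1)$ determines everything, and in particular the set $\{v^i_1, v^i_3, \dots, v^i_{2k-1}\}$ is monochromatic (all odd-indexed vertices share color $c(v^i_1)$) and $\{v^i_{2k}, v^i_{2k+2}, \dots, v^i_{4k-2}\}$ is monochromatic (all even-indexed vertices from $2k$ up share color $c(v^i_2) = $ the other color, using that $2k$ is even). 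So on the alternating index, \emph{both} elements of $M_1$ associated to $i$ are monochromatic, and moreover they have \emph{opposite} colors.

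Now I would push this further. The same argument applied to $K_2$ and the paths $Q_i$ gives: for each color $\alpha$, at most $k-1$ of the $Q_i$ have a monochromatic $\alpha$-edge, so at least one index $i$ (possibly different) has $Q_i$ alternating, giving two oppositely-colored monochromatic sets in $M_2$. But actually I suspect we need more than "one alternating index" — we need to control which colors appear. Let me refine: let $A_\alpha \subseteq [d]$ be the set of indices $i$ such that $P_i$ has a monochromatic $\alpha$-edge; we showed $|A_1|, |A_2| \le k-1$, so $|A_1 \cup A_2| \le 2k-2 < 2k-1 = d$, hence $S := [d] \setminus (A_1 \cup A_2)$ is nonempty and every $i \in S$ is alternating. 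For $i \in S$, the pair of monochromatic $M_1$-sets $\{$odd-front$\}, \{$even-back$\}$ gets colors $\{1,2\}$ in some order, say determined by the single bit $c(v^i_1)$. The key point I want is that the bits $c(v^i_1)$ for $i \in S$, together with the analogous bits from $S' \subseteq [d]$ for the $Q$-side, cannot be simultaneously consistent with properness of $c$ on $M_1 * M_2$. A hyperedge of $M_1 * M_2$ is $e \cup f$ where $e \in M_1$ is one of the two special $k$-sets of some $V_i$ and $f \in M_2$ is one of the two special $k$-sets of some $W_j$; it is monochromatic iff $e$ is monochromatic, $f$ is monochromatic, and they share the color. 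So to avoid a monochromatic hyperedge here, I need: it is never the case that a monochromatic $M_1$-set and a monochromatic $M_2$-set share a color.

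The main obstacle — and the step I expect to require the most care — is closing this last loop: showing that the alternating indices forced on the $V$-side and on the $W$-side must produce a monochromatic $M_1$-set and a monochromatic $M_2$-set of the \emph{same} color. On the $V$-side, among $i \in S$, the two monochromatic sets from $V_i$ realize both colors; so \emph{color $1$ appears as the color of some monochromatic $M_1$-set} and likewise color $2$. Symmetrically \emph{both colors appear as colors of monochromatic $M_2$-sets} (using $|S'| \ge 1$ and that an alternating $Q_i$ yields one color-$1$ and one color-$2$ monochromatic set). Therefore color $1$ is simultaneously the color of a monochromatic $M_1$-set and of a monochromatic $M_2$-set; joining them gives a monochromatic hyperedge of $M_1 * M_2$, contradicting properness of $c$. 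This completes the proof. The places to be careful are: (a) verifying the index arithmetic so that "odd front" has exactly $k$ vertices $v^i_1, v^i_3, \dots, v^i_{2k-1}$ and "even back" has exactly $k$ vertices $v^i_{2k}, \dots, v^i_{4k-2}$, and that under an alternating coloring these are genuinely monochromatic of opposite colors (this uses parity of $2k$ and the length $4k-2$ of the path); (b) checking that a \emph{single} alternating index on each side already yields \emph{both} colors among the monochromatic special sets, so we do not need a stronger pigeonhole count; and (c) the bookkeeping that $K_1$-properness really does forbid $k$ simultaneous monochromatic-$\alpha$ path-edges, which is immediate from the definition of abstract join but should be spelled out. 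I would also double-check whether oddness of $d$ is used anywhere beyond "$d = 2k-1$ with $k \ge 2$"; it appears the construction is set up precisely so that $d$ odd is equivalent to $d = 2k-1$, and the count $2(k-1) < d$ is exactly what makes $S \neq \emptyset$.
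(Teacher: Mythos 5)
Your proposal is correct and follows essentially the same route as the paper: the pigeonhole on the $2k-1$ paths (each color can have at most $k-1$ indices with a monochromatic edge, since $k$ such indices would yield a monochromatic hyperedge of $K_1$, resp.\ $K_2$) forces an alternating $V_i$ and an alternating $W_j$, whose oppositely-colored special $k$-sets then produce a monochromatic hyperedge of $M_1*M_2$. The index arithmetic and parity checks you flag in (a)--(c) all go through exactly as you describe, so no gap remains.
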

\begin{proof}
	Suppose otherwise and we color all vertices of $L_d$ by RED and BLUE so that no hyperedge is monochromatic. In $K_1$, note that not all $2k-1$ vertex sets $V_i$ have consecutive vertices of the same color: Otherwise, without loss of generality, we can assume that RED was used in $V_1, \dots, V_k$ in consecutive vertices. By definition of $K_1$, this gives a hyperedge of $K_1$ which is monochromatic, a contradiction. So at least one $V_i$ has an alternating coloring, say $V_1$ does. The same also holds for $K_2$, so at least one $W_j$ has an alternating coloring, say $W_1$ does. By the alternating colorings given to $V_1$ and $W_1$, at least one (in fact, there are exactly two) of the 4 hyperedges of $M_1 * M_2$ which are subsets of $V_1\cup W_1$ should be monochromatic. \end{proof}

Note that $L_d$ is 3-colorable: For each $i \in [d]$, we color the vertices of $P_i$ by RED, BLUE and GREEN alternatingly in order. We do the same for every $Q_i$. It is easy to see that each of the hyperedges in $K_1$, $K_2$, and $M_1*M_2$ has at least two colors from this coloring.

\subsection{Embeddability of \texorpdfstring{$L_d$}{Ld}}
In this subsection we prove the following proposition. Recall that $L_d$ is defined only for odd $d\geq 3$.
\begin{prop} \label{prop_Ld_embeddable}
For every odd $d\ge 3$, 	
	$L_d$ is geometrically embeddable in $\RR^d$. 
\end{prop}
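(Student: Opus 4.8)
The plan is to realize all vertices of $L_d$ on the moment curve $\gamma_d$ (with $d=2k-1$), choosing a total order $\prec$ on $V(L_d)$ so that $L_d$ is not $(d+2)$-interlacing with respect to $\prec$; then Lemma~\ref{lemma_interlacing} gives the geometric embedding immediately. The natural order to try is to interleave the two ``sides'': place all the $V_i$ and $W_i$ vertices along $\gamma_d$ grouped first by the path-position index (columns $1,\dots,4k-2$) and within each column alternate between the $v$-block and the $w$-block, or alternatively put all of $V_1,\dots,V_d$ first (each $V_i$ appearing as a contiguous run in path order) and then all of $W_1,\dots,W_d$. I would start with the simplest candidate: list $v^1_1,\dots,v^1_{4k-2},v^2_1,\dots,v^d_{4k-2}$ and then $w^1_1,\dots,w^d_{4k-2}$ in the analogous way, and check interlacing; if a bad pattern appears, refine the order. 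The key structural point to exploit is that the geometric join of $k$ disjoint edges coming from the boundary of $C_{2k}(n)$, after projecting to $\RR^{2k-1}$, is a $(2k-1)$-simplex with vertices on $\gamma_{2k-1}$ in skewed position, as noted in the paragraph preceding the construction; this already handles each individual join $P_{i_1}*\cdots*P_{i_k}$ and $Q_{i_1}*\cdots*Q_{i_k}$ and each $M_1*M_2$ join as genuine simplices, so the only thing left is the pairwise intersection condition~(\ref{eqn_embedding_condition}), equivalently non-$(d+2)$-interlacing.

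The verification splits into cases by the types of the two hyperedges $A,B$ involved: (a) both in $K_1$, (b) both in $K_2$, (c) $A\in K_1$, $B\in K_2$, (d) both in $M_1*M_2$, (e) one in $K_1\cup K_2$ and the other in $M_1*M_2$. For (a): a hyperedge of $K_1$ is $e_{i_1}\cup\cdots\cup e_{i_k}$ with $e_{i_j}\in P_{i_j}$ a path-edge, i.e. two consecutive vertices in $V_{i_j}$. In the chosen order, within any single $V_i$ two hyperedges can interlace only within that column-block, and two consecutive vertices $\{v^i_{2a-1},v^i_{2a}\}$ versus $\{v^i_{2b-1},v^i_{2b}\}$ give at most a pattern with no long alternation (they are either disjoint blocks $AABB$/$BBAA$ or overlap in one vertex); across distinct $V_i$ the blocks are separated, so the total interlacing length is bounded by a small constant independent of $k$ — the worst case being roughly $AB$ repeated a bounded number of times. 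The crucial count is: since each $P_{i_j}$ contributes a single edge (two vertices) confined to its own block, any alternation between $A$ and $B$ restricted to one block has length $\le 3$, and blocks concatenate patterns rather than interleaving them, so the interlacing number of $\{A,B\}$ is bounded by something like $2k$ or less, which must be compared to $d+2=2k+1$. This is exactly the delicate inequality and is where I expect to actually have to be careful about the ordering choice. Cases (b),(c) are parallel once the $v$-blocks and $w$-blocks are ordered consistently. For (d),(e), the sets in $M_1,M_2$ are the ``odd-prefix'' and ``even-suffix'' halves of each $V_i$ (or $W_i$): with vertices in path order these are not contiguous (odd indices $1,3,\dots,2k-1$), so I may instead need to choose $\prec$ so that within each $V_i$ the $k$ vertices $\{v^i_1,v^i_3,\dots,v^i_{2k-1}\}$ come as one run and $\{v^i_{2k},v^i_{2k+2},\dots,v^i_{4k-2}\}$ as the next run — i.e. reorder the columns as $1,3,\dots,2k-1,2k,2k+2,\dots,4k-2$ and then $2,4,\dots,2k-2$? — and then re-examine the $K_1$ case under this reordering. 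Reconciling the order that makes $M_1*M_2$ behave (each half a contiguous simplex-spanning run) with the order that keeps $K_1,K_2$ non-interlacing is the main obstacle.

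Concretely, I would: (1) fix the definitive total order $\prec$ on $V(L_d)$, probably: all of $V_1$ then $V_2$ \dots\ then $V_d$ then $W_1$ \dots\ then $W_d$, where inside each $V_i$ the order on columns is $1,3,5,\dots,2k-1,2,4,\dots,2k-2,2k,2k+2,\dots,4k-2$ (so both an $M_1$-set and its complementary $M_1$-set, as well as each path-edge $P_i$ up to a bounded perturbation, sit reasonably); (2) prove the geometric realizability of every single hyperedge as a $(d+1)$-vertex affinely independent set — automatic since points on $\gamma_d$ are in general position and $d+1=2k$, using (\ref{eqn_embedding_cond_affine_ind}); (3) show $L_d$ is not $(d+2)=(2k+1)$-interlacing with respect to $\prec$ by the case analysis above, the heart being a uniform bound on the alternation length in each column-block and the observation that concatenation of patterns across blocks does not create new alternations beyond $\max + \max$ of a bounded quantity; (4) invoke Lemma~\ref{lemma_interlacing} to conclude. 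If the clean single order does not simultaneously work for all five cases, the fallback is to embed $K_1\cup M_1*M_2$-restricted-to-$V$ and the symmetric $W$-part on two disjoint arcs of $\gamma_d$ and argue the $M_1*M_2$ cross hyperedges separately using the skewed-position remark for disjoint edges of $C_{2k}(n)$; but I expect the single moment-curve order to suffice with the right column permutation.
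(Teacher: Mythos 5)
Your plan reduces the whole proposition to producing one total order $\prec$ on $V(L_d)$ for which $L_d$ is not $(d+2)$-interlacing, and that is precisely the step you leave open (``if a bad pattern appears, refine the order''). This is not a presentational gap: it is the heart of the proof, and your concrete candidates already fail for $d=3$ ($k=2$). With blocks $V_1,V_2,V_3,W_1,W_2,W_3$ and path order inside each block, take $B=O_1\cup O_1'=\{v^1_1,v^1_3,w^1_1,w^1_3\}\in M_1*M_2$ and $A=\{v^1_2,v^1_3\}\cup\{v^2_1,v^2_2\}\in P_1*P_2\subseteq K_1$; then $v^1_1\prec v^1_2\prec v^1_3\prec v^2_1\prec w^1_1$ alternates $B,A,B,A,B$, so the order is $5$-interlacing and Lemma~\ref{lemma_interlacing} rules it out. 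Your column-permuted fix (inside each block: $v_1,v_3,v_2,v_4,v_6,v_5$) trades this pattern for another: with $A=\{v^1_1,v^1_2\}\cup\{v^3_1,v^3_2\}$ and $B=\{v^1_2,v^1_3\}\cup\{v^2_1,v^2_2\}$, the chain $v^1_1\prec v^1_3\prec v^1_2\prec v^2_1\prec v^3_1$ alternates $A,B,A,B,A$. In fact, for every block-contiguous order with all $V$-blocks preceding all $W$-blocks the first $V$-block is stuck: if its vertex $v_2$ is \emph{not} $\prec$-between $v_1$ and $v_3$, the pair $\{v_1,v_2\}\cup e$, $\{v_2,v_3\}\cup f$ (with $e,f$ routed to the two later $V$-blocks in the appropriate order) is $(d+2)$-interlacing; if $v_2$ \emph{is} between them, the pair $\{v_1,v_2\}\cup e$, $O_1\cup Y$ (with $Y$ in a $W$-block) is. So the ``reconciliation'' you flag as the main obstacle is not merely delicate --- it is impossible within the entire family of orders you propose, including the fallback of two disjoint arcs of $\gamma_d$, which is again a $V$-before-$W$ moment-curve order; whether \emph{any} order puts $L_d$ on the moment curve is unclear and, in view of the above, doubtful.

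The paper's proof sidesteps exactly this: it does not embed $L_d$ on the moment curve. Only the $2d$ path endpoints are placed on $\gamma_d$, as vertices of a copy of $C_d(2d)$, so that (via the lower-envelope triangulation of Lemma~\ref{lemma_cyclic_triangulation_envelope}) the joins $s_{i_1}*\cdots*s_{i_k}$ are simplices of a triangulation; a reflected copy carries $K_2$; the interior path vertices are then perturbed \emph{off} the curve into tiny neighborhoods of the endpoints, the hyperedges of $M_1$ and $M_2$ are placed inside pairwise skewed $(k-1)$-flats $F_i\subseteq H_i$, $F_j'\subseteq H_j'$ lying in supporting hyperplanes separating the two sides, and condition~(\ref{eqn_embedding_condition}) is verified by hyperplane-separation case analysis rather than by Lemma~\ref{lemma_interlacing}. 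If you want to salvage your route, you would first have to decide whether $L_d$ embeds on $\gamma_d$ at all, which is a genuinely open-ended question (close in spirit to the problem the authors pose about moment-curve embeddability of $(d+1)$-uniform hypergraphs) and not something a case check of a single natural order will settle.
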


The following lemma is useful in the proof (see also \cite[Section 6.1]{triangulations_book}).
\begin{lemma}[{\cite[Lemma 2.3]{edelman_cyclic_triangulation_envelope}}]
	\label{lemma_cyclic_triangulation_envelope}
Let $p_1, \dots, p_n$ be the vertices of $C_{d+1}(n)$ indexed by their order along the moment curve $\gamma_d$.
Let $\hat{0}_{n,d}$ be the set of facets in the lower envelope of $C_{d+1}(n)$, namely the facets of the boundary of $C_{d+1}(n)$ which are visible from any point on the $x_{d+1}$-axis with sufficiently small $(d+1)$-coordinate. Then,
\begin{align*}
\hat{0}_{n,d}=\left\{
\begin{array}{lr}
\{p_1\}*D(d/2, \{p_2, \dots, p_n\}) & \text{when $d$ is even,}\\
D((d+1)/2, \{p_1, \dots, p_n\}) & \text{when $d$ is odd,}
\end{array}\right.
\end{align*}
where 
$$D(k, \{a_1, \dots, a_n\})=\{\{a_{i_1},a_{i_1+1}, \dots, a_{i_k}, a_{i_k+1}\}: i_{j+1}\geq i_j+2 \textrm{ for $j\in [k-1]$}\}$$
for an ordered set $\{a_1, \dots, a_n\}$. Furthermore, $\hat{0}_{n,d}$ gives a triangulation of $C_d(n)$.
\end{lemma}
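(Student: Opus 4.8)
The plan is to isolate, among the facets of the $(d+1)$-dimensional cyclic polytope $C_{d+1}(n)$ (whose facets are described by Gale's evenness criterion, Theorem~\ref{thm_gale}), precisely those on the lower envelope, and then match the resulting index sets with the output of the operator $D$. Throughout I would write the vertices as $\gamma_{d+1}(t_1), \dots, \gamma_{d+1}(t_n)$ with $t_1 < \dots < t_n$, so that $p_i$ is the projection $\gamma_d(t_i)$ and the lifting height attached to $p_i$ is $t_i^{d+1}$.

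The first step is to convert ``lower facet'' into a sign condition. For a $(d+1)$-subset $S = \{s_0 < \dots < s_d\}$ spanning a facet, let $\langle a, x\rangle = c$ be its supporting hyperplane with $a$ the outer normal. The univariate polynomial $t \mapsto \langle a, \gamma_{d+1}(t)\rangle - c$ has degree at most $d+1$ with leading coefficient $a_{d+1}$ (the last coordinate of $a$, which is nonzero since $\gamma_{d+1}$ spans $\RR^{d+1}$), and it vanishes at $t_{s_0}, \dots, t_{s_d}$; hence it equals $a_{d+1}\prod_{i=0}^d (t - t_{s_i})$. Every other vertex $\gamma_{d+1}(t_j)$, $j \notin S$, lies strictly inside the defining halfspace, so $a_{d+1}\prod_{i}(t_j - t_{s_i}) < 0$ for all $j \notin S$. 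The facet lies on the lower envelope exactly when its outer normal points downward, i.e. $a_{d+1} < 0$; combining the two observations, $S$ is a lower facet if and only if $\prod_{i}(t_j - t_{s_i}) > 0$ for every $j \notin S$, which (as $t$ increases with the index) is exactly the requirement that $\#\{s \in S : s > j\}$ be even for all $j \in [n] \setminus S$.

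Next I would translate this parity condition into the block structure of $S \subseteq [n]$. Writing $S$ as a disjoint union of maximal runs of consecutive integers $B_1, \dots, B_r$ (left to right), the count $\#\{s \in S : s > j\}$ for a non-element $j$ lying just after $B_t$ equals $|B_{t+1}| + \dots + |B_r|$; requiring all of these to be even forces $|B_2|, \dots, |B_r|$ to be even, while the gap before $B_1$ (present iff $1 \notin S$) imposes the extra condition that $|S| = d+1$ be even. A short case analysis on the parity of $d$ then yields: for odd $d$, every lower facet is a union of even-length blocks; for even $d$, every lower facet contains $1$, has an odd first block, and even remaining blocks. Since a path on an even number of vertices has a unique perfect matching, each even block splits uniquely into consecutive dominoes $\{s, s+1\}$, so the ``union of even blocks'' sets are precisely $D((d+1)/2, \{p_1, \dots, p_n\})$ (odd $d$), and the ``contains $1$, odd first block, even rest'' sets are precisely $\{p_1\} * D(d/2, \{p_2, \dots, p_n\})$ (even $d$); I would check both inclusions of these bijections explicitly.

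Finally, for the last clause I would invoke the standard correspondence between lower hulls and regular (coherent) subdivisions: lifting the points $p_1, \dots, p_n$ on $\gamma_d$ to $\gamma_{d+1}$ via the height $t \mapsto t^{d+1}$ and projecting the lower faces back to $\RR^d$ produces a regular polyhedral subdivision of $\conv\{p_1,\dots,p_n\} = C_d(n)$ (see \cite[Section 6.1]{triangulations_book}); because $C_{d+1}(n)$ is simplicial, its lower facets are simplices, so this subdivision is a genuine triangulation. The step where I expect to spend the most care is the orientation bookkeeping in the first paragraph, namely pinning down that ``lower envelope'' corresponds to $a_{d+1} < 0$ and that, combined with the inside-vertex inequalities, this is equivalent to positivity of $\prod_i (t_j - t_{s_i})$ over all outside vertices; once that sign is fixed, the block-parity analysis and the domino bijection are routine.
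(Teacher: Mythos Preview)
The paper does not give its own proof of this lemma; it is quoted verbatim from \cite[Lemma 2.3]{edelman_cyclic_triangulation_envelope} and used as a black box in the proof of Proposition~\ref{prop_Ld_embeddable}. So there is nothing in the paper to compare your argument against.

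That said, your proof is correct and is essentially the classical argument. The polynomial $t\mapsto \langle a,\gamma_{d+1}(t)\rangle - c$ factors as $a_{d+1}\prod_i(t-t_{s_i})$, so the outer-normal condition $a_{d+1}<0$ for a lower facet together with the inside-vertex inequalities is exactly the parity requirement that $\#\{s\in S:s>j\}$ be even for every $j\notin S$; your block analysis then correctly identifies these index sets, the domino decomposition matches them with $D$, and the lifting/regular-subdivision correspondence handles the last sentence. One small quibble: your justification that $a_{d+1}\neq 0$ (``since $\gamma_{d+1}$ spans $\RR^{d+1}$'') is not quite the reason. The clean argument is that if $a_{d+1}=0$ then $\langle a,\gamma_{d+1}(t)\rangle - c$ is a polynomial of degree at most $d$ with $d+1$ distinct roots $t_{s_0},\dots,t_{s_d}$, hence identically zero, which would put the entire curve $\gamma_{d+1}$ on the facet hyperplane --- impossible. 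With that fixed, the orientation bookkeeping you flagged as the delicate point is routine.
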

 
\medskip
\begin{figure}[ht] 
\centering
\includegraphics{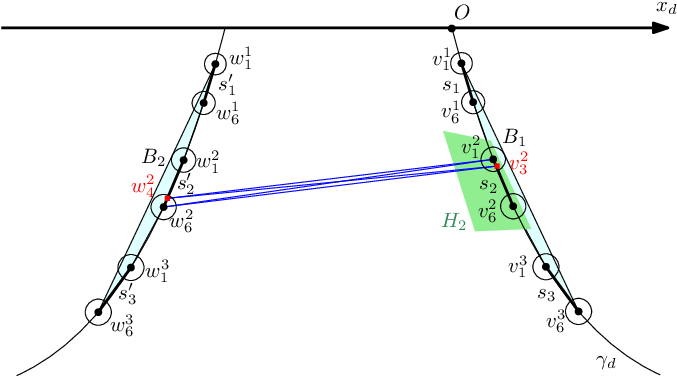}
\caption{Illustration of the geometric embedding of $L_3$. A small circle around each end vertex of $s_i$ and $s_i'$ describes a small neighborhood where the other vertices are introduced. The blue lines indicate a thin tetrahedron in the realization of $M_1*M_2$. The other notations are introduced in the proof below.}
\label{fig:Ld_embedding}
\end{figure}

We first overview how we geometrically embed $L_d$ in $\RR^d$. Recall $d=2k-1$. For $K_1$, we begin with the cyclic polytope $C_d(2d)$, denoted by $B_1$. The vertices of $B_1$ are exactly the end vertices $v_1^i$ and $v_{4k-2}^i$ of the paths $P_i$ for $i\in [d]$, ordered $v_1^1,v_{4k-2}^1,v_1^2,v_{4k-2}^2\ldots,v_1^d,v_{4k-2}^d$ on $\gamma_d$; internal vertices of $P_i$ are not introduced yet. Denote the line segment $[v_1^i, v_{4k-2}^i]$ by $s_i$. 
It follows from  Lemma \ref{lemma_cyclic_triangulation_envelope} that the geometric joins, which equal the convex hulls, of $s_{i_1}, \dots, s_{i_k}$, for all $k$-subsets $\{i_1, \dots, i_k\}\subseteq [d]=[2k-1]$ form a subcollection of the facets in the triangulation $\hat{0}_{2d,d}$ of $B_1$ . Hyperedges of $K_1$ will eventually appear in a subdivision of this triangulation after a perturbation of the vertices. In this way we will embed $K_1$.

Then $B_2$ is defined as a reflected copy of $B_1$ w.r.t. some hyperplane $H$. Denote by $s'_i$ the reflected image of $s_i$. A crucial assumption is that $B_1$ and $B_2$ are far enough from each other so that there exist hyperplanes $H_i$ (resp. $H'_i$) which weakly separate $B_1$ and $B_2$, and $B_1 \cap H_i=s_i$ (resp. $B_2 \cap H'_i=s'_i$) for every $i\in [d]$. 
These hyperplanes will be used to separate hyperedges in $K_1$ (resp. $K_2$) from  hyperedges in $M_1*M_2$.

In $B_1$, we subdivide each $s_i$ to introduce internal vertices of $P_i$, and similarly in $B_2$. We perturb these points so that hyperedges of $M_1*M_2$ become full-dimensional and we can still use the hyperplane separators we previously constructed. Also, we assume that the first half (resp. last half) of the internal vertices of $P_i$ were chosen very close to $v_1^i$ (resp. to $v_{4k-2}^i$), and we do the same for $B_2$. This is to ensure that every two disjoint hyperedges of $M_1*M_2$ do not  intersect in the embedding. We give some additional assumptions for the other pairs of hyperedges in $M_1*M_2$.  
Thus, these final positions will provide a geometric embedding of $L_d$. We now proceed to the details.

\begin{proof}[Proof of Proposition \ref{prop_Ld_embeddable}]
The construction of the geometric embedding progresses through several steps. For notational simplicity, we use the same notation for the vertices of $L_d$ and their embedded images in $\RR^d$ even when their locations are modified, for instance, by perturbing them.
	
	\medskip 

	\textbf{Step 1. Constructing the 
 polytope $B_1$.} We first construct the polytope $B_1$  for $K_1$ in $\RR^d$ which has the combinatorial type of $C_d(2d)$. Let $\gamma_d$ be the moment curve in $\RR^d$. We choose $2d$ vertices
	$$v_1^1, v_{4k-2}^1, v_1^2, v_{4k-2}^2, \dots,  v_1^d, v_{4k-2}^d$$
	in that order on $\gamma_d$. We denote the set of those vertices by $V'$ and let $B_1=\conv(V')$. Let
	$$H_{V'}=\left\{\{v_1^{i_1}, v_{4k-2}^{i_1},v_{1}^{i_2}, v_{4k-2}^{i_2},\dots, v_1^{i_k}, v_{4k-2}^{i_k}\}:1\leq i_1<\cdots <i_k\leq d\right\}.$$
	One can easily see that $H_{V'} \subseteq D((d+1)/2, V')$, hence by Lemma \ref{lemma_cyclic_triangulation_envelope},
	$H_{V'}$ forms a part of this triangulation of $B_1$, and in particular, $H_{V'}$ is embedded on the moment curve.
	
	\medskip 
	
\textbf{Step 2. Making a reflected copy $B_2$ of $B_1$ and separating hyperplanes $H_i$ and $H_i'$.}
Now, we want to construct the polytope $B_2$ similarly for $K_2$ and suitably locate $B_1$ and $B_2$. Recall that we denote the segment $[v_1^i, v_{4k-2}^i]$ by $s_i$ for every $i\in [d]$. 

By Lemma \ref{lemma_cyclic_triangulation_envelope} 
every $s_i$ belongs to the lower envelope of $B_1$, that is, $s_i$ is visible from the point
$p_i=(0,\dots, 0, z_i)$ for every negative $z_i$ with large enough absolute value $|z_i|$.
For each $i \in [d]$ we choose a facet $f_i$ in the lower envelope of $B_1$ which contains $s_i$. Then, by slightly rotating $\aff(f_i)$ while containing $s_i$, we obtain a supporting hyperplane $H_i$ of $B_1$ at $s_i$ (so $B_1 \cap H_i = s_i$) which satisfies the extra property that it separates $B_1$ from $p_i$.

Let $B_2$ be a reflected copy of $B_1$ with respect to the hyperplane 
\[P_z=\{(x_1, \dots, x_{d-1}, z): x_1,\dots, x_{d-1}\in \RR \}\]
for a fixed $z \in \RR$. We denote the vertices of $B_2$ with the same notation 
of $K_2$. That is, the vertex set of $B_2$ is
$$W'=\{w_1^1, w_{4k-2}^1, w_1^2, w_{4k-2}^2, \dots,  w_1^d, w_{4k-2}^d\},$$
where each $w^j_l$ is the reflection of $v^j_l$ w.r.t. $P_z$. 
Let $s_i'=\conv(\{w_1^i, w_{4k-2}^i\})$ and
$$H_{W'}=\left\{\{w_1^{i_1}, w_{4k-2}^{i_1},w_{1}^{i_2}, w_{4k-2}^{i_2},\dots, w_1^{i_k}, w_{4k-2}^{i_k}\}:1\leq i_1<\cdots <i_k\leq d\right\}.$$
We denote the supporting hyperplane for $s_i'$ by $H_i'$, which are chosen as the reflection of $H_i$ by $P_z$. We assume that $z$ is chosen to be negative with sufficiently large $|z|$ so that every $H_i$ separates $B_1$ and $B_2$ such that $B_2$ is in an open halfsapce of $H_i$ for every $i \in [d]$. By symmetry, $H_i'$ also separtates $B_1$ and $B_2$ such that $B_1$ is in an open halfspace of $H_i'$ for every $i \in [d]$.

\medskip 

\textbf{Step 3. Introducing remaining vertices of $L_d$ in the embedding via general position assumptions.} 
We subdivide each $s_i$ by introducing $4k-4$ new vertices $v_2^i,  v_3^i, \dots,  v_{4k-3}^i$ so that $v_1^i, v_2^i,  \dots,  v_{4k-3}^i, v_{4k-2}^i$ are  in order along $s_i$. 
We also subdivide $s_i'$ using $w_1^i, \dots, w_{4k-2}^i$ in order. This subdivision of line segments induces a  subdivision of the simplices 
$$s_{i_1}* \cdots * s_{i_k}, \textrm{ and } s_{i_1}'* \cdots * s_{i_k}'$$
where $1 \leq i_1< \cdots < i_k \leq d$, which are exactly the simplices in the geometric realization of $H_{V'}$ of Step 1. Therefore, this gives a geometric realization of $K_1$ and $K_2$. Denote the $\epsilon$-neighborhood of $p \in \RR^d$ by $N_\epsilon(p)$. On $s_i$, we first locate the vertices $v_2^i, v_3^i, \dots, v_{2k-1}^i$  in $N_\epsilon(v_1^i)$ and locate $v_{2k}^i, \dots, v_{4k-3}^i$ in $N_\epsilon(v_{4k-2}^i)$, where $\epsilon>0$ is chosen such that the neighborhoods $N_{\epsilon}(v)$, over all vertices of $B_1$ and  $B_2$, are pairwise disjoint, and further, for every four distinct vertices $u,v,w,x$ of $B_1$ and $B_2$, the joins $N_\epsilon(u)*N_\epsilon(v)$ and $N_\epsilon(w)*N_\epsilon(x)$ are disjoint. (These joins  are ``thin thickening" of the segments $uv$ and $wx$ resp.) 
This is possible after we slightly perturb the vertices of $B_1 \cup B_2$ so that they are in general position.

We also perturb the hyperplanes $H_i$ and $H_j'$ for each $i,j \in [d]$ as well as the vertices of $B_1 \cup B_2$ if necessary to fix $(k-1)$-dimensional flats $F_i$ and $F_j'$  such that 
$F_i$ and $F_j'$ are in skewed position, 
 $$s_i \subseteq F_i \subseteq H_i \textrm{ and } s_j' \subseteq F_j' \subseteq H_j',$$
 and $H_i$ and $H_j'$ satisfies the separating property described above. We further assume that for every distinct $w_1, w_2 \in W'$ (resp. $v_1, v_2 \in V'$) and every $F_i$ (resp. $F_j'$), $F_i \cup \{w_1, w_2\}$ (resp. $F_j' \cup \{v_1, v_2\}$) is in general position. By this, we can require $\epsilon$ to be sufficiently small so that for every $F_i$ (resp. $F_j'$) and $w_1, w_2 \in W'$ (resp. $v_1, v_2 \in V'$), there is a hyperplane which contains $F_i$ (resp. $F_j'$) and strictly separates $N_\epsilon(w_1)$ from $N_\epsilon(w_2)$ (resp. $N_\epsilon(v_1)$ from $N_\epsilon(v_2)$).  Clearly this is possible.

Now, for every $i\in [d]$, we perturb the new vertices $v_2^i, v_3^i, \dots, v_{2k-1}^i$ within $N_\epsilon(v_1^i)$ and perturb $v_{2k}^i, \dots, v_{4k-3}^i$ within $N_\epsilon(v_{4k-2}^i)$ as follows: 
For the odd-indexed vertices $O_i=\{v_1^i, v_3^i, \dots, v_{2k-1}^i\}$, we additionally require that they are only perturbed within the flat $F_i$ and are in general position, which implies that the resulting vertices affinely span $F_i$. We require the same for the even-indexed vertices $E_i=\{v_{2k}^i, v_{2k+2}^i, \dots, v_{4k-2}^i\}$. Note that the sets $O_i$ and $E_i$, for all $i\in [d]$,  are exactly the hyperedges of $M_1$. The other vertices on $s_i$ are perturbed so that they are contained in $H_i^+$, the open halfspace which contains the interior of $B_1$. This perturbation should be small enough so that it also gives a geometric embedding of $K_1$  (note that for any simplicial complex $K$ geometrically embedded in $\RR^d$, any sufficiently small perturbation again yields a geometric embedding since for any two simplices $\sigma_1, \sigma_2$ of $K$ and the hyperplane $H$ which (weakly) separates $\sigma_1$ and $\sigma_2$, the separation is preserved after the perturbation by a small consistent perturbation of $H$, namely, a perturbation where $H$ continues to contain the perturbed points which were originally on $H$). The vertices $w_j^i$ are perturbed to satisfy the same properties w.r.t. the flats $F_i'$ and halfspaces $H_i'^{+}$, so they give a geometric embedding of $K_2$. 
Finally, we add the simplices of $M_1*M_2$. This completes the construction.

\medskip

Now we show that our construction gives an embedding. Condition (\ref{eqn_embedding_cond_affine_ind}) is easily satisfied: Hyperedges of $K_1$ and $K_2$ are 
in general position; 
hyperedges of $M_1*M_2$ are also in general position
by the skewedness of each pair $F_i$ and $F'_j$ and as hyperedges in $M_1$ and $M_2$ are 
spanning their corresponding flats. 

\smallskip 

We now show that Condition (\ref{eqn_embedding_condition}) is satisfied. Let $e_1$ and $e_2$ be two distinct hyperedges of $L_d$. There are three cases to consider: (i) $e_1$ and $e_2$ are from $K_1\cup K_2$. When they are from the same $K_i$ we already showed that our construction gives an embedding of $K_i$. Else, without loss of generality $e_i\in K_i$ for $i\in [2]$. Since $K_1$ and $K_2$ are separated by a hyperplane then the realizations of $e_1$ and $e_2$ do not intersect.

\smallskip

(ii) $e_1\in K_i$ and $e_2\in M_1*M_2$; without loss of generality $i=1$.  Then, $e_2 = f_1\cup f_2$ where $f_j \in M_j$ for $j\in [2]$. From the construction, we can find 
the hyperplane $H_l$ which contains $f_1$, and the end vertex $v$ of the segment $s_l$ such that $f_1$ is contained in $N_\epsilon(v)$. 
Note that $f_2 \subseteq H_l^-$ ($H_l^-$ is the other open halfspace than $H_l^+$ bounded by $H_l$), hence $e_2$ is realized in the closed halfspace $H_l\cup  H_l^-$. 
On the other hand, $e_1$ is realized in the closed halfspace 
$H_l\cup  H_l^+$. 
Thus,
\begin{align*}
&\conv(e_1)\cap \conv(e_2) = (\conv(e_1)\cap H_l)\cap (\conv(e_2)\cap H_l)\\
= &\conv(e_1 \cap H_l) \cap \conv(e_2 \cap H_l)=\conv(e_1 \cap H_l) \cap \conv(f_1).
\end{align*}
But $e_1 \cap V_l$, a superset of $e_1 \cap H_l$, is either empty or of size 2. In the former case, we see that $\conv(e_1)\cap \conv(e_2)$ is empty. In the latter case, $e_1 \cap H_l$ consists of either (a) a single vertex $w$ in $O_l\cup E_l$ or (b) two vertices $v^l_{2k-1}$ and $v^l_{2k}$. 

In Case (a),  \[\conv(e_1 \cap H_l)\cap \conv(f_1)=\{w\}=\conv(e_1 \cap f_1)=\conv(e_1\cap e_2)\] 
 when $w\in f_1$, and
\begin{align*}
&\conv(e_1 \cap H_l)\cap \conv(f_1)\subseteq \conv(O_l)\cap \conv(E_l)\\
\subseteq &N_\epsilon(v^l_1)\cap N_\epsilon(v^l_{4k-2}) =\emptyset = \conv(e_1 \cap e_2)
\end{align*}
 when $w \notin f_1$.

In Case (b), if our perturbation of $V$ was sufficiently small, $\conv(e_1\cap H_l)\cap \conv(f_1)$ is a single vertex, $v^l_{2k-1}$ when $f_1=O_l$ and $v^l_{2k}$ when $f_1=E_l$; this is possible by applying an argument similar with one given at Step 3, for preserving a weak separation under a small perturbation, to $\conv(e_1\cap H_l)$ and $\conv(f_1)$. Hence we have $\conv(e_1\cap H_l)\cap \conv(f_1)=\conv(e_1\cap e_2)$.

\smallskip

(iii) Finally, $e_1, e_2\in M_1*M_2$.
There exist unique partitions $e_1=f_1^1\cup f_2^1$ and $e_2=f_1^2 \cup f_2^2$, where $f_1^1, f_1^2 \in M_1$ and $f_2^1, f_2^2 \in M_2$. 
If these four sets $f_j^i$ are distinct, then each is contained in $N_\epsilon(v)$ for a distinct vertex $v\in L_d$, and as the joins $N_\epsilon(u)*N_\epsilon(v)$ and $N_\epsilon(w)*N_\epsilon(x)$ are disjoint for every distinct vertices $u,v,w,x\in L_d$ we conclude that $\conv(e_1)\cap \conv(e_2)$ is empty. 

Else, without loss of generality, $f_1^1=f_1^2$; we find the flat $F_i$ which contains $f_1^1$, and the vertices of $W'$ corresponding to $f_2^1$ and $f_2^2$, say $w_1$ and $w_2$. 
Now, there exists a hyperplane $H''$ containing $F_i$ which separates $N_\epsilon(w_1)$ from $N_\epsilon(w_2)$, hence $H''$ separates $f_2^1$ from $f_2^2$. If follows that   
the intersection $\conv(e_1)\cap \conv(e_2)$ is contained in $H$ and equals $\conv(f_1^1)=\conv(e_1\cap e_2)$. 
 
This completes the proof.
\end{proof}

\section*{Acknowledgement}
We thank Karim Adiprasito and Geva Yashfe for their illustrations of Theorem \ref{theorem_PL_extension_karim-geva}, and anonymous referees for their helpful remarks and comments. This project was initiated when the first author was a postdoctoral research fellow at Hebrew University of Jerusalem, Israel.

\bibliographystyle{hplain}
\bibliography{bibliography}

\end{document}